\newtheorem{theorem}{Theorem}[section]
\newtheorem{lemma}[theorem]{Lemma}
\newtheorem{proposition}[theorem]{Proposition}
\newtheorem{corollary}[theorem]{Corollary}
\theoremstyle{definition}
\newtheorem{definition}[theorem]{Definition}
\newtheorem{remark}[theorem]{Remark}
\newcommand{\zee}{\mathbb{Z}}
\newcommand{\arr}{\mathbb{R}}
\newcommand{\cue}{\mathbb{Q}}
\newcommand{\cee}{\mathbb{C}}
\newcommand{\enn}{\mathbb{N}}
\newcommand{\hh}{\mathbb{H}}
\newcommand{\wt}{\widetilde}
\newcommand{\ts}{\textstyle}
\newcommand{\ul}{\underline}
\newcommand{\tG}{\widetilde{G}}
\newcommand{\SLt}{\widetilde{SL}}
\newcommand{\bs}{\backslash}
\renewcommand{\S}{{\mathcal S}}
\def\EtoB{\pi^E_B}
\def\SgtoS2{\pi^{\Sigma}_{S}}
\def\EtoSg{\pi_{\Sigma}^E}
\def\piSL2{\pi_{\widetilde{SL(2,\mathbb{R})}}}
\def\BtoS2{{\pi_{S}^B}}
\DeclareMathOperator{\gr}{gr}
\DeclareMathOperator{\ind}{ind}
\DeclareMathOperator{\isom}{Isom}
\DeclareMathOperator{\im}{Im}
\DeclareMathOperator{\lcm}{lcm}
\numberwithin{equation}{section}
\begin{document}

\title{Cylindrical contact homology of 3-dimensional Brieskorn manifolds}
\author{Sebastian Haney}
\author{Thomas E. Mark}

\begin{abstract} Cylindrical contact homology is a comparatively simple incarnation of symplectic field theory whose existence and invariance under suitable hypotheses was recently established by Hutchings and Nelson. We study this invariant for a general Brieskorn 3-manifold $\Sigma(a_1,\ldots, a_n)$, and give a complete description of the cylindrical contact homology for this 3-manifold equipped with its natural contact structure, for any $a_j$ satisfying $\frac{1}{a_1} + \cdots + \frac{1}{a_n} < n-2$. 
\end{abstract}

\maketitle

\section{Introduction}

Cylindrical contact homology is a part of a much larger structure, symplectic field theory, whose framework was introduced by Eliashberg, Givental and Hofer around 2000 \cite{EGH}. At this point there is a vast literature on symplectic field theory, addressing both applications of the theory and the foundational underpinnings of it; there are still substantial difficulties in defining the most general versions of symplectic field theory. This paper is concerned with a much more specialized situation, for which these issues have been resolved due in large part to the work of Hutchings and Nelson \cite{HN2016, HN2019}. 

We consider a closed 3-dimensional manifold $M$ equipped with a contact structure $\xi = \ker \lambda$, so $\xi$ is a tangent hyperplane distribution defined globally by the vanishing of a 1-form $\lambda$ satisfying $\lambda\wedge d\lambda \ne 0$. The choice of $\lambda$ determines a vector field $R_\lambda$, the Reeb vector field, by the properties $\lambda(R_\lambda) = 1$ and $\iota_{R_\lambda} d\lambda = 0$. As reviewed in more detail in Section \ref{prelims}, the cylindrical contact homology of $(M,\xi)$ is the homology of a chain complex whose generators are periodic orbits of $R_\lambda$, and whose differential counts pseudo-holomorphic cylinders in the symplectization $(\arr\times M, d(e^t\lambda))$ that are asymptotic to Reeb orbits. Making this notion precise and proving that the resulting homology groups are well-defined and independent of choices is the subject of \cite{HN2016, HN2019}, and requires certain assumptions on $M$ and $\xi$. The property that suffices for us is the following.

\begin{definition} A contact 3-manifold $(M, \xi = \ker\lambda)$ is {\em hypertight} if no periodic orbit of the Reeb vector field $R_\lambda$ is contractible.
\end{definition}

Note that a ``periodic orbit'' is not assumed to be parametrized with minimal period; if it is, then it is called a simple orbit. One of the main results of \cite{HN2019} is that the cylindrical contact homology of a hypertight 3-manifold is an invariant of the underlying contact structure $\xi$, i.e. does not depend on the choice of nondegenerate hypertight contact form $\lambda$ (or any of the other attendant choices in the construction). Here ``nondegenerate'' is a condition on $\lambda$ that implies in particular that closed Reeb orbits are isolated; it can be arranged by a small perturbation, but see Section \ref{prelims} for further discussion.

Our main result is a calculation of the cylindrical contact homology in the case that $M$ is a Brieskorn manifold $\Sigma(a_1,\ldots, a_n)$ of hyperbolic type, meaning that $\sum \frac{1}{a_j} < n-2$. Here $a_1,\ldots, a_n$ are arbitrary integers greater than or equal to 2 satisfying the given condition; if they are pairwise relatively prime then $\Sigma(a_1,\ldots, a_n)$ is an integral homology sphere. Let us consider the case $n=3$ for the moment, and recall that $\Sigma(p,q,r)$ is defined to be the link of a complex surface singularity:
\[
\Sigma(p,q,r) = \{(x,y,z)\in \cee^3\,|\, x^p + y^q + z^r = 0 \}\cap \{|x|^2 + |y|^2 + |z|^2 = 1\}.
\]
Each tangent space $T_m \Sigma(p,q,r)$ contains a unique complex line; this field of complex tangencies defines a ``standard'' contact structure $\xi = \xi_{p,q,r}$ on $\Sigma(p,q,r)$, also known as the {\it Milnor fillable} contact structure.

Following \cite{HN2016} we take cylindrical contact homology to be defined with rational coefficients.

\begin{theorem}\label{firstthm} For any pairwise relatively prime triple $p,q,r$ as above, the contact structure $\xi_{p,q,r}$ is hypertight. The cylindrical contact homology of $(\Sigma(p,q,r), \xi_{p,q,r})$ is given by
\[
HC_*(\Sigma(p,q,r), \xi_{p,q,r}) = \bigoplus_{n\geq 1} G(p,q,r)[-2dn] \oplus \bigoplus_{n\geq 1} H_*(S^2)[-2dn-2],
\]
where $d = pqr - qr - pr-pq$.
\end{theorem}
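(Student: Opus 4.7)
The plan is to exploit the Seifert fibered structure of $\Sigma(p,q,r)$: work with a contact form whose Reeb vector field generates the natural $S^1$-action on the link, identify its Reeb orbits (which come in Morse--Bott families), and apply Bourgeois's Morse--Bott perturbation scheme to reduce the calculation to an enumeration of generators.

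For hypertightness I take the contact form whose Reeb vector field is the infinitesimal generator of the Seifert $S^1$-action coming from the weighted $\mathbb{C}^*$-action on $\mathbb{C}^3$ restricted to the link. The Reeb orbits are then precisely the Seifert fibers of $\Sigma(p,q,r)\to S^2(p,q,r)$. Under the hyperbolic hypothesis $\frac{1}{p}+\frac{1}{q}+\frac{1}{r}<1$ the base is a hyperbolic orbifold, and $\Sigma(p,q,r)$ carries a $\widetilde{SL(2,\mathbb{R})}$-geometry; the generic fiber represents an infinite-order central element of $\pi_1$, and with $p,q,r$ pairwise coprime each of the three exceptional fibers also has infinite order. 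Hence no Reeb orbit is contractible, so $(\Sigma(p,q,r),\xi_{p,q,r})$ is hypertight.

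For the orbit count, the unperturbed Reeb flow has three isolated simple short orbits at the exceptional fibers of multiplicities $p,q,r$ together with a Morse--Bott $S^1$-family of generic orbits parametrized by the smooth locus of the base $S^2(p,q,r)$. Apply Bourgeois's perturbation using a Morse function on the base with one index-$2$ and one index-$0$ critical point at smooth points; at each iteration level $n\geq 1$ the generic family contributes two generators naturally identified with $H_0(S^2)$ and $H_2(S^2)$, while the iterates of the short orbits that do not reduce to iterates of the generic family contribute the generators of $G(p,q,r)$. Next compute Conley--Zehnder indices by trivializing $\xi_{p,q,r}$ along each Seifert fiber and reading off the rotation numbers of the linearized Reeb flow: the generic $n$-fold iterate produces its two $H_*(S^2)$-generators in gradings $2dn+2$ and $2dn+4$ (where $d=pqr-qr-pr-pq$), and the short-orbit generators at iteration level $n$ all sit in even gradings in the block beginning at $2dn$. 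Since every generator is of even degree, $\partial=0$ for parity reasons and the homology equals the chain group, which matches the stated formula.

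The main obstacle is this last index bookkeeping: one must compute rotation numbers of the linearized Reeb flow along the three exceptional fibers and a generic fiber simultaneously, using a globally consistent trivialization of $\xi_{p,q,r}$, then verify after Morse--Bott perturbation that every generator lies in even CZ degree. Once this parity pattern is in hand, invariance of cylindrical contact homology for hypertight contact 3-manifolds (Hutchings--Nelson), extended to the Morse--Bott setting via Bourgeois, yields the theorem.
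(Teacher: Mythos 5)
Your overall framework (an $S^1$-invariant contact form whose Reeb field generates the Seifert action, a Morse--Bott perturbation over the orbifold base, and Conley--Zehnder bookkeeping in a global trivialization) matches the paper's, and your hypertightness argument is an acceptable variant of the paper's (the paper lifts Reeb orbits to vertical lines in $\mathbb{H}\times\mathbb{R}$; you would instead use that $\pi_1(\Sigma(p,q,r))=\Pi(p,q,r)$ is a discrete subgroup of the torsion-free group $\widetilde{SL(2,\mathbb{R})}$, so no power of a fiber class is trivial). The genuine gap is in the perturbation step and the resulting claim that $\partial=0$ by parity. You propose a Morse function on the base with a single index-$0$ and a single index-$2$ critical point, both at smooth points. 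This is not achievable: any $S^1$-invariant perturbation descends to a function on the orbifold $S^2(p,q,r)$, and near a cone point of order $a$ such a function is a $\mathbb{Z}/a$-invariant function on a disk, whose differential at the center is forced to vanish. Hence $[v_1],[v_2],[v_3]$ are necessarily critical, the iterates $\gamma_{v_1}^{np},\gamma_{v_2}^{nq},\gamma_{v_3}^{nr}$ are unavoidable generators lying in the same free homotopy class and period as the $n$-fold regular fiber, and $\chi(S^2)=2$ then forces index-$1$ critical points (the paper takes three minima at the cone points, two saddles, one maximum). The saddle fibers are positive hyperbolic orbits with even Conley--Zehnder index, i.e.\ \emph{odd} grading, and the differential is nonzero: the paper computes $\partial\gamma_{s_1}^n=\gamma_{v_1}^{np}-\gamma_{v_2}^{nq}$ and $\partial\gamma_{s_2}^n=\gamma_{v_2}^{nq}-\gamma_{v_3}^{nr}$ by identifying index-one cylinders with gradient trajectories after passing to a finite cover on which the Seifert fibration becomes an honest prequantization bundle (Fox's theorem plus Nelson's results). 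The resulting six-generator subcomplex at each level $n$ does have homology $H_*(S^2)$, so your final answer agrees with the naive Morse--Bott count, but your stated reason for it is false, and without these differentials you cannot correctly separate the $G(p,q,r)$ summand (the iterates $\gamma_{v_1}^k$ with $k<p$, etc.) from the three exceptional iterates that get partially killed in homology.

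A smaller but real error: your gradings $2dn+2$ and $2dn+4$ for the $H_*(S^2)$ generators are inconsistent with the statement being proved. With respect to the global $\widetilde{SL(2,\mathbb{R})}$-invariant trivialization the linearized flow along a regular fiber is a \emph{negative} rotation (the Robbin--Salamon index of the $n$-fold regular fiber is $-2dn$), so the surviving generators sit in degrees $-2dn-2$ and $-2dn$; getting this right requires explicitly comparing the constant trivialization on $\mathbb{H}\times\mathbb{R}$ with the invariant one, which is the content of the paper's Proposition \ref{trivprop} and the computation following Theorem \ref{IndexFormula}.
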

Here $G(p,q,r)$ is a graded $\cue$-vector space of dimension $(p-1) + (q-1) + (r-1)$, whose grading takes values in even integers between $-2$ and $-2d$. One can understand the generators of the homology above as follows. Recall that the action of $S^1$ on $\cee^3$ given by $\zeta\cdot(x,y,z) = (\zeta^{qr}x, \zeta^{pr}y, \zeta^{pq}z)$ preserves $\Sigma(p,q,r)$, and induces on it the structure of a Seifert 3-manifold having three exceptional orbits $\gamma_1$, $\gamma_2$ and $\gamma_3$ corresponding to stabilizers of order $p$, $q$, and $r$ respectively. These exceptional orbits are simple Reeb orbits of a natural contact form defining $\xi_{p,q,r}$ (and of the perturbations that we use, see Sections \ref{brieskornsec} and \ref{perturbsec} for a description of the contact form and its perturbation), hence correspond to generators of the cylindrical chain complex. Moreover, the iterates $\gamma_1^p$, $\gamma_2^q$ and $\gamma_3^r$ are each homotopic to a regular orbit of the circle action. The space $G(p,q,r)$ is spanned by the iterates of $\gamma_1$, $\gamma_2$, $\gamma_3$ up through orders $p-1$, $q-1$, $r-1$ respectively, while (intuitively speaking) the singular homology $H_*(S^2)$ appears because the orbit space of the circle action is topologically a 2-sphere. The additional copies of these spaces with shifted gradings correspond to the same collections of orbits, covered with higher multiplicity.

The strategy of the proof is as follows. When $\frac{1}{p} + \frac{1}{q} + \frac{1}{r}< 1$, it is well known that $\Sigma(p,q,r)$ is a geometric 3-manifold whose universal covering is $\widetilde{SL(2,\arr)}$, the universal covering group of $PSL(2;\arr)$, equipped with a left-invariant metric. In fact, as described by Milnor \cite{milnor1975} and Dolga\v cev \cite{dolgacev}, one can identify $\Sigma(p,q,r)$ with the quotient space of $\widetilde{SL(2,\arr)}$ by a discrete subgroup $\Pi(p,q,r)$. As $PSL(2,\arr)$ can be identified with the unit tangent bundle of the hyperbolic plane $\hh$, there is an identification $\widetilde{SL(2,\arr)} \cong \hh\times\arr$. Moreover, a natural left-invariant contact structure on $\widetilde{SL(2,\arr)}$ admits a contact form that is easily described in terms of the natural coordinates on $\hh\times \arr$ as $\lambda = dt + \frac{1}{y}dx$ (see Theorem \ref{invcontact}; here and throughout we consider the upper half-plane model for $\hh$). The Reeb trajectories of this form are simply vertical lines, and invariance of $\lambda$ yields the contact form on $\Sigma(p,q,r)$ mentioned above. Our strategy is to use certain $\Pi(p,q,r)$-invariant functions described by Milnor to construct a nondegenerate perturbation of $\lambda$ that is explicit enough to allow calculations of the Conley-Zehnder indices. 

Many of the results in Milnor's work were generalized by Neumann to links of Brieskorn complete intersection singularities \cite{neumann}, and our techniques carry over easily to this situation. Recall that if we are given an $n$-tuple of integers $(a_1,\ldots, a_n)$, $a_j\geq 2$, and a generic $(n-2)\times n$ matrix of complex numbers $(m_{ij})$, the 3-manifold $\Sigma(a_1,\ldots, a_n)\subset \cee^n$ is defined by
\begin{equation}\label{brieskorndef}
\Sigma(a_1,\ldots, a_n)= \{(z_1,\ldots z_n) \, | \, \ts\sum_{j=1}^n m_{ij}z_j^{a_j} = 0 \, (i = 1,\ldots, n-2)\}\cap S^{2n-1}.
\end{equation}
Neumann shows that if $\sum \frac{1}{a_j} < n-2$ then there is a description of $\Sigma(a_1,\ldots, a_n)$ as a quotient of $\widetilde{SL(2,\arr)}$ by a group $\Pi(a_1,\ldots, a_n)$ just as before. Moreover, $\Sigma(a_1,\ldots, a_n)$ is a Seifert manifold over an orientable surface $S_g$ of genus $g$, where:
\begin{eqnarray}
g &=& \frac{1}{2}\left(2 + (n-2)\frac{\prod a_i}{\lcm(a_i)} - \sum s_j \right),\label{genusS}\\
s_j &=& \frac{\prod_{i \neq j} a_i}{\lcm_{i\neq j} a_i}.
\end{eqnarray}
Our result is easiest to state in the case that the $a_i$ are pairwise relatively prime. Note that in this case $g = 0$ and $\Sigma(a_1,\ldots, a_n)$ is an integral homology sphere. The theorem above generalizes directly to:

\begin{theorem}\label{introgenthm} For any pairwise relatively prime $a_1,\ldots, a_n$ with $\sum\frac{1}{a_j} < n-2$, the natural contact structure $\xi$ on $\Sigma(a_1,\ldots, a_n)$ is hypertight. The cylindrical contact homology of $(\Sigma(a_1,\ldots, a_n), \xi)$ is given by
\[
HC_*(\Sigma(a_1,\ldots, a_n), \xi) = \bigoplus_{n\geq 1} G(a_1,\ldots, a_n)[-2dn] \oplus \bigoplus_{n\geq 1} H_*(S^2)[-2dn-2],
\]
where 
\[
d = (n-2)\prod_j a_j - \sum_j a_1\cdots\widehat{a_j}\cdots a_n.
\]

\end{theorem}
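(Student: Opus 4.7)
The plan is to implement the strategy sketched at the end of the introduction. First I would invoke Neumann's identification $\Sigma(a_1,\ldots,a_n) \cong \widetilde{SL(2,\arr)}/\Pi(a_1,\ldots,a_n)$ together with the diffeomorphism $\widetilde{SL(2,\arr)} \cong \hh\times\arr$, under which the left-invariant contact form becomes $\lambda = dt + y^{-1}\,dx$. This descends to a contact form $\lambda_0$ on $\Sigma(a_1,\ldots,a_n)$ defining $\xi$. Since the Reeb vector field of $\lambda$ is $\partial_t$, its trajectories upstairs are vertical lines, so the closed Reeb orbits of $\lambda_0$ are precisely the fibers of the Seifert fibration, with an exceptional fiber of multiplicity $a_j$ having minimal period $1/a_j$ times that of a regular fiber. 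Hypertightness follows because a contractible closed Reeb orbit downstairs would lift to an honest vertical loop upstairs, corresponding to a fixed point of the deck action, which is impossible for a nontrivial element of $\Pi$.

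Step two is to make the form nondegenerate, as $\lambda_0$ is Morse--Bott: for every period, its closed orbits come in $S^1$-families that foliate $\Sigma$. I would apply a Bourgeois--type perturbation by replacing $\lambda_0$ with $(1+\epsilon f)\lambda_0$, where $f$ is a $\Pi$-invariant function on $\widetilde{SL(2,\arr)}$ built from the explicit invariant functions of Milnor and Neumann, chosen so that its descent to the base orbifold is Morse. In the pairwise coprime case the base is topologically $S^2$ with $n$ cone points (since $g=0$ by the given genus formula), so I take $f$ to have a critical point at each of the $n$ orbifold points (producing the exceptional fibers $\gamma_j$ as genuine Reeb orbits, together with all iterates $\gamma_j^m$) plus one additional minimum--maximum pair on the smooth part. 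Up to any fixed action, the resulting nondegenerate closed Reeb orbits are then the iterates $\gamma_j^m$ of the exceptional fibers together with, for each $k\ge 1$, a pair of orbits near the $k$-th regular-fiber iterate coming from the smooth critical points of $f$; iterates $\gamma_j^{k a_j}$ whose period coincides with a regular-fiber iterate are bad orbits in the Morse--Bott sense and are excluded from the chain complex.

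Step three is the computation of the Conley--Zehnder indices. I would trivialize $\xi$ along each orbit by capping in a suitable cyclic Seifert cover and linearize the perturbed Reeb flow using the explicit description on $\hh \times \arr$. I expect closed-form expressions for the rotation numbers of the iterates of the $\gamma_j$ and of the regular-fiber orbits in terms of $a_1,\ldots,a_n$. The conclusion should be that the iterates $\gamma_j^m$ with $1\le m \le a_j-1$ fall into even degrees between $-2$ and $-2d$ and together span $G(a_1,\ldots,a_n)$; iterates with $a_j < m$ and $m \not\equiv 0 \pmod{a_j}$ shift this by further multiples of $-2d$; and the pair of orbits associated to the $k$-th iterate of a regular fiber sits in degrees $-2dk-2$ and $-2dk$, contributing the claimed $H_*(S^2)[-2dk-2]$ summand. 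Crucially, every generator has even Conley--Zehnder index.

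Finally, since the differential has degree $-1$ and every generator sits in even degree, it must vanish identically, so $HC_*$ is the free rational vector space on this list of generators, yielding the stated formula. The main obstacle is step three: carrying out the Conley--Zehnder calculation for all iterates and both orbit types simultaneously requires controlling the linearized return map of the perturbed flow and making a consistent choice of trivialization of $\xi$ that respects the Seifert structure and interacts correctly with the explicit form of Neumann's invariant function $f$. Only once these indices are pinned down does the parity-based collapse of the differential, and hence the clean bigraded answer, become available.
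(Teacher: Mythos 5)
Your first step (hypertightness via lifting Reeb orbits to vertical lines in $\hh\times\arr$) and your general strategy (perturb by a $\Pi$-invariant function built from the Milnor--Neumann automorphic forms, compute Conley--Zehnder indices in an explicit trivialization) match the paper. But there is a genuine gap in your step two that propagates into the final step. The perturbing function you describe cannot exist: its descent to the base $S^2$ would be a Morse function with $n$ minima at the orbifold points (forced, since near $[v_j]$ the perturbation is $|\phi_j|^2$, which has a zero there), one further minimum, one maximum, and no index-one critical points, giving Euler characteristic $n+2\neq 2$. Any admissible Morse function on the base must have $n-1$ saddle points (the paper takes $n$ minima at the orbifold points, $n-1$ saddles, and one maximum). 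The circle fibers over those saddles are positive hyperbolic Reeb orbits with \emph{even} Conley--Zehnder index, hence \emph{odd} grading $CZ-1$, sitting in degree $-2dk-1$ for each $k\geq 1$. So the complex is not concentrated in a single parity and the differential does not vanish identically; your parity-based collapse fails.

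Relatedly, your claim that the iterates $\gamma_j^{ka_j}$ are ``bad orbits in the Morse--Bott sense'' and get excluded is incorrect: they are good elliptic orbits (bad orbits are even iterates of negative hyperbolic orbits only) and they are genuine generators. In the paper's computation, for each $k\geq 1$ the $n$ orbits $\gamma_1^{ka_1},\ldots,\gamma_n^{ka_n}$ all sit in grading $-2dk-2$, the $n-1$ saddle orbits sit in grading $-2dk-1$, and the maximum orbit sits in grading $-2dk$; the differential sends each saddle orbit to a difference of two exceptional-fiber iterates, matching the Morse differential on the base. This identification is the content of Theorem \ref{boundarythm}, proved by lifting holomorphic cylinders to a prequantization bundle $E(p,q,r)$ over a smooth finite cover of the orbifold base and invoking Nelson's correspondence between index-one cylinders and gradient trajectories. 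The resulting subcomplex computes $H_*(S^2)[-2dk-2]$ --- one of the $n$ classes in degree $-2dk-2$ survives, together with the maximum orbit in degree $-2dk$. Your proposal arrives at the correct final answer, but by discarding generators that belong in the complex and omitting the generators (and differentials) that actually cancel them; without the saddle orbits and the cylinder-counting step, the argument does not close.
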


Here again, $G(a_1,\ldots, a_n)$ is a graded $\cue$-vector space that we can think of as generated by the iterates of the exceptional Seifert fibers $\gamma_j$, $j = 1,\ldots, n$, up to order $a_j - 1$. 

The corresponding statement for $\Sigma(a_1,\ldots, a_n)$ for general $a_j$ is given in Theorem \ref{generalthm} below.

The paper is organized as follows. Section \ref{prelims} gives a more detailed description of the definition of cylindrical contact homology, following Hutchings and Nelson \cite{HN2016}. Section \ref{brieskornsec} reviews Brieskorn spheres from the perspective of 3-dimensional geometry, namely as the quotient of the universal covering group of $SL(2,\arr)$ by a discrete subgroup, following Milnor \cite{milnor1975}. This description allows us to write down a contact form describing the standard contact structure on $\Sigma(p,q,r)$ in terms of coordinates on the universal cover, where the Reeb dynamics are particularly easy to describe. In fact, the Reeb vector field for this form is everywhere tangent to the circle orbits on $\Sigma(p,q,r)$. This is geometrically pretty, but of course the form then fails to be nondegenerate. Our coordinate description allows us to perturb the form to achieve nondegeneracy (up to a fixed action, at least), following ideas in \cite{Ne18}. This is done in section \ref{perturbsec}, where the crucial calculation is the Conley-Zehnder indices of the resulting nondegenerate closed orbits. It is this calculation that determines the grading on cylindrical contact homology, and the perturbation and index calculations are the essentially new contributions of our work. The differential on the complex is mostly trivial for grading reasons, but there are nonzero differentials that we compute by reducing to the situation of \cite{Ne18}.

In section \ref{gensec} we describe the generalization from $\Sigma(p,q,r)$ to arbitrary $\Sigma(a_1,\ldots, a_n)$. 

{\bf Acknowledgements.} Our thanks to Jo Nelson for many helpful conversations. Thomas Mark was supported in part by a grant from the Simons Foundation (523795, TM).

\section{Contact Geometry Preliminaries}\label{prelims}

\begin{definition} Let $M$ be a smooth, oriented manifold of dimension $2n-1$.  A (positive, co-oriented) {\em contact structure} on $M$ is a hyperplane distribution $\xi\subset TM$ that is ``maximally non-integrable'' in the sense that $\xi$ is equal to $\ker(\lambda)$ for a $1$-form $\lambda\in \Omega^1(M)$, such that $\lambda\land(d\lambda)^{n-1}$ is a volume form determining the given orientation of $M$.  Any such form $\lambda$ is called a {\em contact form} for $\xi$.  
\end{definition}

Observe by the required condition on $\lambda$, the restriction of $d\lambda$ to the contact planes determines a symplectic structure on $\xi$.

If $\lambda$ is a contact form on $M$ and $g:M\rightarrow(0,\infty)$ is smooth then $g\lambda$ defines the same contact structure, as $\ker\lambda = \ker g\lambda$.  Given two contact structures $\xi_0 = \ker\lambda_0$ and $\xi_1 =\ker\lambda_1$, we say that they are \textit{contactomorphic} if there is a diffeomorphism $\psi:M\rightarrow M$ such that $\psi_*(\xi_0) = \xi_1$, which is equivalent to saying that $\psi^*\lambda_1 = g\lambda_0$ for some smooth positive function $g$ on $M$.

Given any contact form $\lambda$ on $M$, there is a unique vector field $R_{\lambda}$ satisfying the properties
\begin{description}
\item[(i)] $\lambda(R_{\lambda}) \equiv 1$
\item[(ii)] $d\lambda(R_{\lambda},\cdot) \equiv 0$
\end{description}
This vector field $R_{\lambda}$ is called the \textit{Reeb vector field of $\lambda$}.  Condition \textbf{(i)} implies that $R_{\lambda}$ is always transverse to $\ker\lambda$, but in general for two contact forms $\lambda_0$ and $\lambda_1$ with $\ker\lambda_0 = \ker\lambda_1$, the Reeb vector fields $R_{\lambda_0}$ and $R_{\lambda_1}$ will be different.  A \textit{Reeb orbit of period $T$} of $\lambda$ is defined to be a map $\gamma:\mathbb{R}/T\mathbb{Z}\rightarrow M$ such that $\dot{\gamma}(t) = R_{\lambda}(\gamma(t))$. We do not require $T$ to be the minimal period, but we say that $\gamma$ is {\em simple} if the map $\gamma:\mathbb{R}/T\mathbb{Z}\rightarrow M$ is injective.  For a Reeb orbit $\gamma$ of period $T$ and an integer $k\in \enn$, we define the $k$th iterate $\gamma^k$ to be the composition
\[\mathbb{R}/kT\mathbb{Z}\rightarrow\mathbb{R}/T\mathbb{Z}\xrightarrow{\gamma}M\]
where the first map is the natural quotient.

We denote the flow of the Reeb vector field by $\psi_t$, so by definition $\dot{\psi_t} = R_{\lambda}\circ\psi_t$. It is an easy exercise to see that $\psi_t$ is a contactomorphism for each $t$ (in fact, $\psi_t^*\lambda = \lambda$), and in particular the derivative $d\psi_t$ maps the contact hyperplane $\xi_x$ symplectomorphically to $\xi_{\psi_t(x)}$.

 Let $\gamma$ be a Reeb orbit of period $T$. We say $\gamma$ is {\em nondegenerate} if the linearized return map $d\psi_T:\xi_{\gamma(0)}\rightarrow\xi_{\gamma(T) = \gamma(0)}$ does not have $1$ as an eigenvalue.  If $\lambda$ is a contact form whose Reeb orbits are all nondegenerate, then $\lambda$ is said to be a nondegenerate contact form. Given $\xi$, the set of nondegenerate contact forms is dense among contact forms defining $\xi$.

\subsection{Cylindrical contact homology}\label{backgroundsec}

Let us fix a 3-dimensional contact manifold $(M,\xi)$ and a nondegenerate contact form $\lambda$. Under suitable hypotheses one can define the cylindrical contact homology $CH(M,\xi)$ as the homology of a chain complex whose generators are (certain) periodic Reeb orbits and whose differential ``counts'' pseudoholomorphic cylinders in a symplectization of $M$. We outline the construction here, essentially following Hutchings-Nelson \cite{HN2016}. Note that the chain complex we describe depends on the choice of contact form $\lambda$ as well as an almost-complex structure $J$ on the symplectization of $(M,\lambda)$ (see below), so we write the complex as $CC(M,\lambda, J)$. Recent work of Hutchings and Nelson \cite{HN2019} implies that the homology $CH(M,\xi) = H_*(CC(M,\lambda,J))$ depends only on $\xi$. To simplify matters we assume where convenient that no periodic orbits of $R_\lambda$ are contractible, which will be the case in the examples we consider, and this in particular means that $\lambda$ is {\em dynamically convex} in the terminology of \cite{HN2016}. Moreover, we work throughout with rational coefficients that we omit from the notation.
 
First we describe the Reeb orbits that generate $CC(M,\lambda,J)$. Let $\gamma$ be a Reeb orbit of period $T$ and consider the linearized return map $d\psi_T: \xi_{\gamma(0)}\to \xi_{\gamma(0}$. This is an area-preserving linear isomorphism with respect to the symplectic (area) form $d\lambda$, and we can distinguish three cases depending on its eigenvalues. We say $\gamma$ is \textit{elliptic} if $d\psi_T$ has eigenvalues on the unit circle, \textit{positive hyperbolic} if $d\psi_T$ has positive real eigenvalues, and \textit{negative hyperbolic} if $d\psi_T$ has negative real eigenvalues. A Reeb orbit $\gamma$ is called \textit{bad} if $\gamma$ is an even iterate of a negative hyperbolic orbit.  If $\gamma$ is not bad it is said to be \textit{good}.  Define $CC(M,\lambda,J)$ to be the $\mathbb{Q}$-vector space generated by the good Reeb orbits of $\lambda$. 

In cases of interest to us, for example when $\pi_2(M) = 0$, the complex $CC(M,\lambda,J)$ carries a relative grading by the integers, whose definition makes use of the Conley-Zehnder index of a Reeb orbit. For an orbit $\gamma$, fix a trivialization $\tau$ of $\xi|_{\gamma}$, so that under this trivialization we can regard $\lbrace d\psi_t\rbrace_{t\in[0,T]}$ as a path of symplectic matrices. The Conley-Zehnder index of this path can be defined as follows (cf. \cite[Section 2]{HN2016} or \cite[Section 3.2]{HutchingsECHnotes}). If $\gamma$ is hyperbolic then $d\psi_T$ has an eigenvector $v$, and the path $\lbrace d\psi_t(v)\rbrace_{t\in[0,T]}$ rotates  through an angle $\theta = k\pi$ for some integer $k$. In this case we define $CZ_{\tau}(\gamma) = k$, and note that this is even when $\gamma$ is positive hyperbolic and odd when $\gamma$ is negative hyperbolic.  If $\gamma$ is elliptic then we can adjust $\tau$ by a homotopy so that each $\lbrace d\psi_t\rbrace_{t\in[0,T]}$ is a rotation by the angle $2\pi\theta_t$, where $\theta_t$ depends continuously on $t$ and $\theta_0 = 0$. In this case we let $CZ_{\tau}(\gamma) = 2\lfloor\theta_T\rfloor+1$. Note that $CZ_\tau(\gamma)$ is even if and only if $\gamma$ is a positive hyperbolic orbit.

The Conley-Zehnder index depends only on the homotopy class of the trivialization $\tau$ along $\gamma$ (and its value modulo 2 is independent of the trivialization). In the cases we consider below the contact structure $\xi$ admits a global trivialization on all of $M$, and such a trivialization is unique up to homotopy whenever the first Betti number of $M$ vanishes. In particular, under these circumstances it makes sense to compare the numerical values of $CZ_\tau(\gamma)$ for different $\gamma$, and introduce a grading on $CC(M,\lambda, J)$ by declaring a Reeb orbit $\gamma$ to lie in grading level $\gr(\gamma) := CZ_\tau(\gamma) - 1$ for a global trivialization $\tau$ as above. 

The differential on $CC(M,\lambda,J)$ is defined in terms of the symplectization of $(M,\lambda)$, which is the 4-manifold $\arr\times M$ with the symplectic form $d(e^t\lambda)$, where $t$ is the coordinate on $\arr$. An almost-complex structure $J$ on $\arr\times M$ is compatible with $\lambda$ if it is $\arr$-invariant, carries the coordinate vector $\partial_t$ into the Reeb field $R_\lambda$, preserves the contact planes, and satisfies $d\lambda(v, Jv)>0$ for all nonzero vectors $v\in \xi$. Fix such a $J$, and let $\pi_{\mathbb{R}}$ and $\pi_M$ denote the projection maps from $\mathbb{R}\times M$ onto the $\mathbb{R}$ and $M$ factors, respectively.  A {\em $J$-holomorphic cylinder} from $\gamma^+$ to $\gamma^-$ is a map $u:\mathbb{R}\times S^1\rightarrow\mathbb{R}\times M$ such that:
\begin{itemize}
\item[] $\partial_t u+J\partial_{\theta}u = 0$,
\item[] $\lim_{t\rightarrow\pm\infty}\pi_{\mathbb{R}}(u(t,\theta)) = \pm\infty$, and 
\item[]$\lim_{t\rightarrow\pm\infty}\pi_M(u(t,\cdot))$ is a parametrization of $\gamma^{\pm}$. 
\end{itemize}
The cylinder $u$ is said to have positive end $\gamma^+$ and negative end $\gamma^-$.  Two $J$-holomorphic cylinders with the same positive and negative ends are said to be equivalent if they differ by a translation and rotation of $\mathbb{R}\times S^1$, and the set of such equivalence classes is denoted $\mathcal{M}^J(\gamma^+,\gamma^-)$.  There is a $\mathbb{R}$ action on $\mathcal{M}^J(\gamma^+,\gamma^-)$ given by translation in the $\mathbb{R}$ factor of $\mathbb{R}\times M$.

The {\em Fredholm index} of a map $u: \arr\times S^1\to \arr\times M$ with positive and negative ends at $\gamma^+$ and $\gamma^-$ is defined by
\[
\ind(u) = CZ_{\tau}(\gamma^+)-CZ_{\tau}(\gamma^-)+2c_{1}(u^*\xi,\tau)
\]
where $c_{1}(u^*\xi,\tau)$ is the first Chern class relative to the trivialization $\tau$. (the definition normally includes a term accounting for the Euler characteristic of the domain, which always vanishes for us).  The Chern class term vanishes iff $\tau$ extends to a trivialization of $u^*\xi$, which happens, for instance, when $\tau$ is induced by a global trivialization of $\xi$.  If $J$ is generic and $u$ is somewhere injective then $\mathcal{M}^J(\gamma^+,\gamma^-)$ is a manifold of dimension $\ind(u)$ near $u$.  

We write $\mathcal{M}_k^J(\gamma^+,\gamma^-)$ for the set of $J$-holomorphic cylinders $u$ with $\ind(u) = k$. Following \cite{HN2016}, we define the operator $\delta:CC(M,\lambda,J)\rightarrow CC(M,\lambda,J)$ by
\[
\delta\gamma^+ = \sum_{\gamma^-}\sum_{u\in\mathcal{M}^J_1(\gamma^+,\gamma^-)/\mathbb{R}}\frac{\epsilon(u)}{d(u)}\gamma^-
\]
where $d(u)$ denotes the covering multiplicity of $u$, and $\epsilon(u)\in\lbrace\pm 1\rbrace$ is a sign determined by a choice of coherent system of orientations.\footnote{Taking a different coherent choice of orientations will give a chain homotopic chain complex once $\partial$ is defined, and hence this does not affect the definition of cylindrical contact homology.} (For this definition to be sensible, it is necessary that the spaces $\mathcal{M}_1^J(\gamma^+,\gamma^-)/\mathbb{R}$ are compact and cut out transversely, hence consist of finite sets of points.)  Define another operator $\kappa: CC(M,\lambda,J)\rightarrow CC(M,\lambda,J)$ by $\kappa(\gamma) = d(\gamma)\gamma$ where again $d$ indicates the covering multiplicity.  One expects that $\delta\kappa\delta = 0$, which would imply that the composition
\[
\partial := \delta\kappa
\]
is a differential on $CC(M,\lambda,J)$. The main result of \cite{HN2016} is that, for generic choice of $J$, this construction does give rise to a  well-defined chain complex in the case that $\lambda$ is dynamically convex, in particular if $\lambda$ is hypertight. The cylindrical contact homology is then defined by 
\[
 CH(M,\lambda,J) = H_*(CC(M,\lambda, J), \partial).
 \]
 Moreover, the results of \cite{HN2019} imply that the homology groups do not depend on the choice of hypertight form $\lambda$, nor the choice of suitable $J$ (see Corollary 1.10 of \cite{HN2019}).
 
\section{Brieskorn manifolds and $\widetilde{SL}$ geometry}\label{brieskornsec}
In this section we  describe the perspective we take on Brieskorn manifolds in this paper. For concreteness, we consider the case $n = 3$ in \eqref{brieskorndef}, and write $p,q,r$ for $a_1,a_2,a_3$. We also suppose $p,q,r$ are pairwise relatively prime, returning to the general case in section \ref{gensec}. The Brieskorn manifolds $\Sigma(p,q,r)$ that we study (those for which $\frac{1}{p}+\frac{1}{q} + \frac{1}{r} < 1$) can be described as quotients of $\widetilde{SL(2,\mathbb{R})}$, the universal cover of $PSL(2,\mathbb{R})$, by certain discrete subgroups. For brevity we write $\SLt$ for $\widetilde{SL(2,\mathbb{R})}$. Section \ref{Bri1} describes $\SLt$ as a space of `labeled biholomorphic maps' on the upper half-plane in $\cee$, and introduces an invariant contact form on $\SLt$. Section \ref{Bri2} recalls the construction of a discrete subgroup $\Pi(p,q,r)< \SLt$ such that $\SLt/\Pi(p,q,r) \cong \Sigma(p,q,r)$, and relates this description of $\Sigma(p,q,r)$ with its description as the link of a hypersurface singularity in $\cee^3$. Much of this section is a summary of portions of the paper of Milnor \cite{milnor1975}.

\subsection{Geometry of $\widetilde{SL(2,\mathbb{R})}$}\label{Bri1}

Let $(\mathbb{H},\frac{1}{y^2}(dx^2+dy^2))$ be the upper half-plane with its usual hyperbolic metric.  We remark $\mathbb{H}$ has the structure of a K{\"a}hler manifold with the complex structure it inherits as an open subset of $\cee$, together with the symplectic form $\omega = \frac{1}{y^2}dx\wedge dy$.  It is well-known that the isometry group of $\mathbb{H}$ is $PSL(2,\mathbb{R})= SL(2,\mathbb{R})/\lbrace\pm 1\rbrace$, whose elements act as M{\"o}bius transformations $z\mapsto\frac{az+b}{cz+d}$, where $a,b,c,d\in\mathbb{R}$ and $ad-bc=1$.  The action of $PSL(2,\mathbb{R})$ on $\mathbb{H}$ is transitive because the M{\"o}bius transformation represented by $\frac{1}{\sqrt{y}}\begin{psmallmatrix} y & x \\ 0 & 1\end{psmallmatrix}$ takes $i\in\mathbb{H}$ to $x+iy$.  It is easy to see that the stabilizer of $i$ is
\[
\operatorname{stab}(i) = \left.\bigg\lbrace\varphi_\alpha :=\begin{psmallmatrix} \cos(\alpha/2) & \sin(\alpha/2) \\ -\sin(\alpha/2) & \cos(\alpha/2)\end{psmallmatrix}\bigg |\,\alpha\in[0,2\pi)\bigg\rbrace\right/\{\pm 1\}\cong S^1
\]
In fact, the derivative of $\varphi_\alpha: \hh\to\hh$ at $z = i$ is a counterclockwise rotation by $\alpha$.
Since an isometry of $\hh$ is determined uniquely by its value and derivative at a single point, it follows easily that $PSL(2,\arr)$ acts simply transitively on the unit tangent bundle $T^1\hh$. Indeed, we get an orbit map 
\begin{eqnarray*}
&\Phi: PSL(2, \arr)\to T^1\hh \cong \hh\times S^1& \\
&\Phi(g) = (g(i), dg_i(1)) \leftrightarrow \left(\frac{ai + b}{ci+d}, -2\arg(ci + d)\right)&
\end{eqnarray*}
where we think of $1\in T_i\hh = \cee$ as a unit tangent vector via the canonical trivialization, and identify the unit vector $dg_i(1)\in T_{g(i)}\hh$ with its angular coordinate. The orbit map is a diffeomorphism that we can describe explicitly as follows. If $g = \begin{psmallmatrix} a & b \\ c & d\end{psmallmatrix}$ is an arbitrary M{\"o}bius transformation with $g(i) = z = x+iy$, then $g$ differs from the map $g_z = \frac{1}{\sqrt{y}}\begin{psmallmatrix} y & x \\ 0 & 1\end{psmallmatrix}$ by composition with a rotation $\varphi_\alpha$ as above: $g = g_z\cdot \varphi_\alpha$ for some unique $\alpha$. Thus in $PSL(2,\arr)$,
\begin{eqnarray}
\nonumber\begin{psmallmatrix} a & b \\ c & d\end{psmallmatrix} &=& \ts\frac{1}{\sqrt{y}}\begin{psmallmatrix} y & x \\ 0 & 1\end{psmallmatrix}\begin{psmallmatrix} \cos(\frac{\alpha}{2}) & \sin(\frac{\alpha}{2}) \\ -\sin(\frac{\alpha}{2}) & \cos(\frac{\alpha}{2})\end{psmallmatrix}\\
\label{actionformula}& =& \ts\frac{1}{\sqrt{y}}\begin{psmallmatrix} y\cos(\frac{\alpha}{2})-x\sin(\frac{\alpha}{2}) & y\sin(\frac{\alpha}{2})+x\cos(\frac{\alpha}{2}) \\ -\sin(\frac{\alpha}{2}) & \cos(\frac{\alpha}{2})\end{psmallmatrix}
\end{eqnarray}
for some $\alpha\in[0,2\pi)$. In particular $\begin{psmallmatrix} a & b \\ c & d\end{psmallmatrix}\in PSL(2,\arr)$ is determined uniquely by $(x+iy, \alpha)\in \hh\times S^1$, verifying invertibility of $\Phi$. Writing $x,y,\alpha$ in terms of $a,b,c,d$ gives:
\[
\Phi\begin{psmallmatrix} a & b \\ c & d\end{psmallmatrix} = \ts\left( \frac{ac+bd}{c^2+d^2},\frac{1}{c^2+d^2}, -2\arg(ci + d)\right)\in \hh\times S^1.
\]
More generally, we have:

\begin{lemma}\label{actionlemma} Given $h = \begin{psmallmatrix} a & b \\ c & d\end{psmallmatrix}\in PSL(2,\arr)$, the left action of $h$ on $PSL(2,\arr)$ is identified under $\Phi$ with the map $\hh\times S^1\to \hh\times S^1$ given by
\[
h: (w, \theta) \mapsto \left(\frac{aw+b}{cw+d}, \theta - 2\arg(cw + d)\right)
\]
\end{lemma}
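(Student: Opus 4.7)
My plan is to verify the two components of $\Phi(hg) \in \hh \times S^1$ directly from the explicit formula
\[
\Phi\begin{psmallmatrix} a' & b' \\ c' & d'\end{psmallmatrix} = \left(\tfrac{a'i+b'}{c'i+d'},\, -2\arg(c'i+d')\right)
\]
derived above. First I would write $g = \begin{psmallmatrix} A & B \\ C & D\end{psmallmatrix}$, so that $\Phi(g) = (w,\theta)$ with $w = (Ai+B)/(Ci+D)$ and $\theta = -2\arg(Ci+D)$, and then compare $\Phi(hg)$ with $(h(w),\theta - 2\arg(cw+d))$.

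For the $\hh$-component, since the M\"obius action of $PSL(2,\arr)$ on $\hh$ is a left action, $(hg)(i) = h(g(i)) = h(w) = (aw+b)/(cw+d)$, matching the first coordinate of the proposed formula. For the $S^1$-component, the bottom row of the matrix product $hg$ is $(cA+dC,\,cB+dD)$, so the formula for $\Phi$ yields
\[
\Phi_{S^1}(hg) = -2\arg\!\bigl((cA+dC)i + (cB+dD)\bigr) = -2\arg\!\bigl(c(Ai+B) + d(Ci+D)\bigr).
\]
The key algebraic identity is the factorization $c(Ai+B)+d(Ci+D) = (Ci+D)(cw+d)$, which is immediate from $w = (Ai+B)/(Ci+D)$. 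Taking $-2\arg$ of both sides then gives $\Phi_{S^1}(hg) = -2\arg(Ci+D) - 2\arg(cw+d) = \theta - 2\arg(cw+d)$, as required.

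Alternatively, one can argue geometrically via the chain rule: the holomorphic derivative of $h(z) = (az+b)/(cz+d)$ at $w$ equals $(cw+d)^{-2}$, which rotates tangent vectors through the angle $-2\arg(cw+d)$; the stated formula then reflects the identity $d(hg)_i = dh_w\circ dg_i$, consistent with the fact that $\Phi$ can be viewed as the orbit map $g\mapsto (g(i), dg_i(1))$ on the unit tangent bundle.

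The only thing to watch carefully is the $\operatorname{mod}\pi$ ambiguity in $\arg$ coming from passing between $SL(2,\arr)$ and $PSL(2,\arr)$: changing a matrix representative by $\pm 1$ shifts $\arg$ by $\pi$, so $-2\arg$ is well-defined mod $2\pi$ and the identity holds as claimed in $S^1$. This is the same bookkeeping already used to verify that the formula for $\Phi$ itself is well-defined on $PSL(2,\arr)$, so nothing substantial remains; the lemma is essentially a restatement of the group law for $PSL(2,\arr)$ under the diffeomorphism $\Phi$.
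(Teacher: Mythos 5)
Your proposal is correct. Your primary argument — computing $\Phi(hg)$ directly from the coordinate formula $\Phi\begin{psmallmatrix} a' & b' \\ c' & d'\end{psmallmatrix} = \bigl(\tfrac{a'i+b'}{c'i+d'},\,-2\arg(c'i+d')\bigr)$ and using the factorization $c(Ai+B)+d(Ci+D) = (Ci+D)(cw+d)$ — is a genuinely different route from the paper's. The paper instead works with the tangent-bundle interpretation: it writes $(w,\theta) = \Phi(g) = (g(i), dg_i(1))$, observes that $dg_i(1) = e^{i\theta} \in T_w\hh$, and applies the chain rule $d(hg)_i = dh_w \circ dg_i$ together with $h'(w) = (cw+d)^{-2}$ to read off the angular shift $-2\arg(cw+d)$; this is exactly the "alternative" you sketch in your third paragraph. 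Your matrix computation is more elementary and self-contained, needing only the explicit formula for $\Phi$ and additivity of $\arg$ modulo the sign ambiguity, which you correctly handle. The paper's derivative argument is more conceptual and, more importantly, is the form of the statement actually reused later: the identification of the $S^1$-factor with unit tangent vectors and the action of $g$ via $\wt{g'}$ is precisely what underlies the labeled holomorphic maps and the pullback of fractional-degree differentials in Section 3. Both proofs are complete and correct.
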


\begin{proof} If we write $p = (w, \theta)= \Phi(g)\in \hh\times S^1$ then $p = (g(i), (dg)_i(1))$  where $g = g_w\cdot \varphi_\theta$ as above. Since, under the canonical trivialization of $T\hh$, $(dg_w)_i$ is the identity and $(d\varphi_\alpha)_i = e^{i\theta}$ (as a linear map $T_i\hh\to T_i\hh$), we have $(dg)_i(1) = e^{i\theta}\in T_w\hh = \cee$. Therefore
\[
h(p) = (h(w), dh_w(e^{i\theta})) = \ts\left(h(w), \frac{e^{i\theta}}{(cw + d)^2}\right),
\]
where the second coordinate describes a unit vector in $T_{h(w)}\hh$, whose angular coordinate is $\theta - 2\arg(cw+ d)$.
\end{proof} 

To describe the universal cover $\SLt$ of $PSL(2,\arr)$, observe that the discussion above implies that $\Phi$ lifts to a diffeomorphism $\tilde\Phi: \SLt\to \hh\times \arr$, where the latter is the universal cover of $T^1\hh$. Strictly, there is a choice involved in defining $\tilde\Phi$, so we determine it uniquely by specifying $\tilde\Phi(e) = (i, 0)\in \hh\times \arr$ where $e\in \SLt$ is the identity element.

\begin{theorem}\label{invcontact} Let $\hh\times \arr$ be the universal cover of $\hh\times S^1$.  Then the 1-form
\[
\ts\lambda = dt + \frac{1}{y}dx
\]
is invariant under the action of $\SLt$ corresponding under $\tilde\Phi$ to left multiplication, where $(x,y,t)$ are the usual coordinates on $\hh\times \arr$. Moreover, $\lambda$ is a contact form whose Reeb vector field is $R_\lambda = \partial_t$.
\end{theorem}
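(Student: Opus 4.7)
My plan is to dispatch the three assertions---the contact condition, the identification of the Reeb vector field, and the $\SLt$-invariance---in that order. The first two are essentially direct, and the invariance is the only substantive step.

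For the contact condition, I would compute $d\lambda = \tfrac{1}{y^2}\,dx\wedge dy$, from which
\[
\lambda\wedge d\lambda = \tfrac{1}{y^2}\,dt\wedge dx\wedge dy,
\]
a nowhere-vanishing volume form. For the Reeb vector field, $\lambda(\partial_t)=1$ is immediate, and $\iota_{\partial_t}d\lambda=0$ since $d\lambda$ contains no $dt$. Hence $R_\lambda=\partial_t$.

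The real content is the invariance. By Lemma \ref{actionlemma}, any $\tilde h\in\SLt$ covering $h=\begin{psmallmatrix}a&b\\c&d\end{psmallmatrix}\in PSL(2,\arr)$ acts on $\hh\times\arr$ by
\[
(w,t)\mapsto\bigl(\tfrac{aw+b}{cw+d},\ t-2\arg(cw+d)\bigr),
\]
where $\arg(cw+d)$ is a continuous choice on $\hh$ determined by $\tilde h$ (the ambiguity is an additive constant, so it does not affect pullbacks). I would then compute $\tilde h^*\lambda$ directly: setting $A=cx+d$ and $B=cy$, one has $y'=y/(A^2+B^2)$, while $dx'$ and $d\arg(cw+d)$ can be read off from the standard formulas for a M\"obius transformation. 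The key cancellation is that the $dy$-terms appearing in $\tfrac{1}{y'}\,dx'$ and in $-2\,d\arg(cw+d)$ are opposite in sign, while their $dx$-coefficients combine (using $B=cy$) to give $1/y$. Since $\tilde h^*dt = dt - 2\,d\arg(cw+d)$, adding everything up yields $\tilde h^*\lambda=\lambda$.

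The only obstacle is bookkeeping. A cleaner route that I would likely adopt in the final write-up is to reduce to a generating set for $PSL(2,\arr)$: horizontal translations $w\mapsto w+s$ and hyperbolic dilations $w\mapsto \mu^2 w$ have $c=0$, so $\arg(cw+d)$ is constant and invariance of $dt$ and of $y^{-1}dx$ is manifest separately; only the inversion $w\mapsto -1/w$ then requires the explicit cancellation above, and on this single element the computation is a matter of a few lines.
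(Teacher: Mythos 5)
Your proposal is correct and takes essentially the same route as the paper: both verify invariance by directly pulling back $\lambda$ under the explicit action formula of Lemma \ref{actionlemma} (the paper organizes the computation in the complex coordinate $z$ after reducing to $PSL(2,\arr)$-invariance on $\hh\times S^1$ via the central extension, while you work in real coordinates on $\hh\times\arr$ and optionally reduce to generators, a harmless simplification since only $d\arg(cw+d)$ enters and elements with $c=0$ contribute nothing there). Your explicit verifications of the contact condition and of $R_\lambda=\partial_t$ supply exactly what the paper leaves as ``a simple exercise.''
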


Note that the existence of left-invariant contact forms on $\SLt \cong \hh\times \arr$ is well-known. Our aim is to make this explicit enough to allow the perturbations and calculations in the following sections.

\begin{proof} Observe that there is a short exact sequence of Lie groups corresponding to the universal cover:
\begin{equation}\label{ses}
1\to \zee\to \SLt \to PSL(2,\arr)\to 1,
\end{equation}
describing $\SLt$ as a central extension. The action of a generator of $\zee$ by left multiplication in $\SLt$ corresponds to the covering transformation, and under $\tilde\Phi$ this can be taken to be the vertical shift $t\mapsto t+2\pi$. Clearly $\lambda$ is invariant under this action, and from this it follows that to prove $\SLt$ invariance of $\lambda$ it suffices to prove $PSL(2,\arr)$ invariance of the corresponding form $\lambda_0 = d\theta + \frac{1}{y}dx$  on $\hh\times S^1$. For this it is convenient to use the complex coordinate $z = x+iy$ on $\hh$, so that $\lambda_0 = d\theta + \frac{i}{z - \bar z}(dz + d\bar z)$. Now explicit computation gives that for $g = \begin{psmallmatrix} a & b \\ c & d\end{psmallmatrix}$,
\[
g^*(\lambda_0) = g^*d\theta + \frac{i}{z + \bar z}\left(\frac{c\bar z + d}{cz + d}\, dz + \frac{cz + d}{c\bar z + d}\,d\bar z\right).
\]
(Note that $\theta$ here is the coordinate on the $S^1$ factor of $\hh\times S^1\cong T^1\hh$, on which the M\"obius transformation $g$ acts via its derivative.) Moreover, observe that for a smooth map $f$ between subsets of $\cee$ we have $f^*d\theta = d(\arg(f)) = -\frac{i}{2f} df +\frac{i}{2\bar f}d\bar f$. It follows that 
\begin{equation}\label{thetapullback}
g^*d\theta = d(\theta - 2\arg(cz +d)) = d\theta + \frac{ic}{cz+d}\,dz - \frac{ic}{c\bar z + d}\,d\bar z.
\end{equation}
Combining the two equations above with a little algebra gives the desired invariance. That $\lambda$ is a contact form with Reeb field $\partial_t$ is a simple exercise.
\end{proof}

\begin{proposition}\label{trivprop} The contact structure $\xi =\ker\lambda$ on $\hh\times \arr$ admits a global, $\SLt$-invariant trivialization given by 
\[
e_1 =  \left[\begin{array}{c} y\cos t \\ y\sin t\\ -\cos t \end{array}\right] \quad e_2 =\left[\begin{array}{c} -y\sin t \\ y\cos t\\ \sin t \end{array}\right].
\]
\end{proposition}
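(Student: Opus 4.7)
The plan is to verify three claims about the vector fields $e_1, e_2$: they are sections of $\xi = \ker\lambda$; they are pointwise linearly independent; and they are $\SLt$-invariant. The first two are immediate. Substituting into $\lambda = dt + y^{-1}dx$ gives $\lambda(e_1) = -\cos t + y^{-1}(y\cos t) = 0$ and $\lambda(e_2) = \sin t + y^{-1}(-y\sin t) = 0$, so $e_1, e_2 \in \ker\lambda$. The projections of $e_1, e_2$ onto the $(x,y)$-plane, namely $y(\cos t, \sin t)$ and $y(-\sin t, \cos t)$, are mutually orthogonal of length $y > 0$, so the vectors are linearly independent.

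For the invariance claim, I would use the explicit action formula from Lemma \ref{actionlemma}: an element of $\SLt$ covering $h = \begin{psmallmatrix}a & b \\ c & d\end{psmallmatrix} \in PSL(2,\arr)$ acts on $\hh \times \arr$ by $(w, t) \mapsto (h(w), t + \phi(w))$, where $\phi(w) = -2\arg(cw+d)$ is continuously chosen. Holomorphicity of $h$ identifies the Jacobian of $(x, y) \mapsto (x', y') = (\Re h(w), \Im h(w))$ with $\rho^{-2}$ times rotation by $\phi$, where $\rho^2 = |cw+d|^2 = y/y'$. Expanding the identity $h^*\lambda = \lambda$ established in Theorem \ref{invcontact} component by component yields the key relations
\[
\phi_x = \frac{1 - \cos\phi}{y}, \qquad \phi_y = \frac{\sin\phi}{y}.
\]

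With these in hand, verifying $dh_p(e_1(p)) = e_1(h(p))$ is a short calculation: the angle-addition formulas convert the $(x,y)$-part, obtained by rotating $y(\cos t, \sin t)$ by $\phi$ and rescaling by $\rho^{-2}$, into $y'(\cos t', \sin t')$ where $t' = t + \phi$; meanwhile the $\partial_{t'}$-coefficient becomes $(y\cos t)\phi_x + (y\sin t)\phi_y - \cos t$, which collapses via the identities above to $-\cos t'$. This is precisely $e_1$ at $h(p)$, and the argument for $e_2$ is identical. The only substantive step is the derivation of the formulas for $\phi_x, \phi_y$ from invariance of $\lambda$; everything else is mechanical bookkeeping with angle-addition.
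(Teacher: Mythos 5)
Your proof is correct, but it runs in the opposite direction from the paper's. The paper \emph{constructs} the frame: it fixes the basis $\{\partial_x-\partial_t,\partial_y\}$ of $\xi$ at the base point $(i,0)$ and transports it by the derivative of the (simply transitive) left $\SLt$-action, so invariance is automatic by construction and the displayed formulas for $e_1,e_2$ are the \emph{output} of computing $dg_{(i,0)}$ explicitly via Lemma \ref{actionlemma} and \eqref{thetapullback}, then rewriting $(a,b,c,d)$ in terms of $(x,y,\theta)$ using \eqref{actionformula}. You instead take the formulas as given and \emph{verify} the three required properties directly; the one nontrivial step, equivariance, you reduce to the identities $\phi_x=(1-\cos\phi)/y$ and $\phi_y=(\sin\phi)/y$, which you correctly extract from the already-proved invariance $h^*\lambda=\lambda$ of Theorem \ref{invcontact} (I checked: expanding $h^*(dt'+\tfrac{1}{y'}dx')$ with the Jacobian $\rho^{-2}R_\phi$ and $\rho^2=y/y'$ gives exactly those two relations, and the angle-addition bookkeeping for $dh_p(e_i(p))=e_i(h(p))$ then closes). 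Your route buys a self-contained check that never differentiates the action at a general point, and it cleanly recycles Theorem \ref{invcontact}; the paper's route explains where the formulas come from and gets invariance for free. One small point worth a sentence in your write-up: elements of $\SLt$ covering the same $h\in PSL(2,\arr)$ differ by the central deck transformation $t\mapsto t+2\pi k$, so you should note that $e_1,e_2$ are $2\pi$-periodic in $t$ (equivalently, that your continuous choice of $\phi$ only matters modulo $2\pi$) to conclude invariance under all of $\SLt$ rather than just under one chosen lift of each M\"obius transformation.
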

\begin{proof} We have a basis for $\xi_{(0,1,0)} = \ker(dt + dx)$ given by $\{\partial_x - \partial_t, \partial_y\}$, which we transport to all of $\hh\times\arr$ by the derivative of the $\SLt$ action. To make the calculation it suffices to replace $t$ by $\theta$ (i.e., work with the $PSL(2,\arr)$ action on $\hh\times S^1$), by invariance under $2\pi\zee$ translation. 

In terms of the coordinates $(z,\theta)$, the derivative of the action of $g = \begin{psmallmatrix} a & b \\ c & d\end{psmallmatrix}$ at $(z,\theta) = (i,0)$ is given by
\[
dg_{(i,0)} = \left[\begin{array}{c} \frac{dz}{(ci + d)^2}\\ d\theta + 2 Re\left(\frac{ic\,dz}{ci + d}\right)\end{array}\right]
\]
(differentiate the formula of Lemma \ref{actionlemma} at $(w,\theta) = (i,0)$, and use \eqref{thetapullback}).
Evaluating on $\partial_x - \partial_\theta = \partial_z + \partial_{\bar z} - \partial_\theta$ and $\partial_y = i(\partial_z - \partial_{\bar z})$ gives, respectively,
\[
\left[\begin{array}{c} \frac{1}{(ci+d)^2}\\ -1+\frac{2c^2}{c^2 + d^2}\end{array}\right] \quad\mbox{and}\quad \left[\begin{array}{c} \frac{i}{(ci+d)^2} \\ -\frac{2cd}{c^2+d^2}\end{array}\right].
\]
Passing back to real coordinates and using \eqref{actionformula} to write this in terms of $(x,y,\theta)$ gives the result.
\end{proof}

\begin{remark} The contact form $\lambda$ is positive, if $\hh\times \arr$ is oriented as usual by $\{\partial_x, \partial_y, \partial_t\}$. Likewise, $d\lambda$ determines an orientation on $\xi$, with respect to which the basis above is positive.
\end{remark}

\begin{remark} It is known that we can equip $\SLt$ with a left-invariant Riemannian metric for which the contact structure $\xi$ is just the orthogonal complement of the $t$-axis. Moreover, $\xi$ is invariant under all orientation-preserving isometries of this metric. The form $\lambda$ is invariant under the identity component of the isometry group, which is generated by left multiplications together with arbitrary shifts in the $t$-direction (and is sent to $-\lambda$ under the representative $(x,y,t)\mapsto (-x,y,-t)$ of the other component of orientation-preserving isometries). The trivialization of Proposition \ref{trivprop}, however, is preserved only by the group $\SLt\subset \isom(\SLt)$.
\end{remark}

In the discussion to follow, it will be useful to think of the lifted orbit map
\[
\tilde\Phi:\SLt\to \widetilde{T^1\hh}\cong\hh\times\arr 
\]
 as mapping into $\hh\times \widetilde{\cee^\times}$, where $\cee^\times = \cee - \{0\}$, and $\widetilde{\cee^\times}\to \cee^\times$ is the universal cover. Specifically, we think of the unit tangent bundle $T^1\hh\cong \hh\times S^1$ as lying in the deleted tangent bundle $T^\times \hh = \hh\times \cee^\times$ and then pass to the universal cover. From this point of view, it was observed by Milnor that $\SLt$ can be identified with the group of ``labeled biholomorphic maps'' from $\hh$ to $\hh$.

\begin{definition} A {\em labeled holomorphic map} between open sets $U, V\subset \cee$ is a holomorphic map $g:U\rightarrow V$ such that $g'(z)$ is nowhere-vanishing, together with a continuous lift $\widetilde{g'}:U\rightarrow\widetilde{\mathbb{C}^\times}$ of its derivative.  Such a lift is called a {\em label}. %Equivalently $\widetilde{g'}$ is a branch of $\log(g')$.
\end{definition}

Note that there is an isomorphism of abelian Lie groups $\tilde{e}: \cee\to \widetilde{\cee^\times}$ lifting the exponential map $\cee\to \cee^\times$. In particular we write the group operation in $\widetilde{\cee^\times}$ multiplicatively, corresponding to addition in $\cee$. The projection $\widetilde{\cee^\times}\to \cee^\times$ will be written $p$, and our convention is that $p\circ \tilde{e}(z) = e^{iz}$:
\[
\xymatrix{
\cee \ar[r]^{\tilde e}\ar[rd]_{e^{iz}} & \wt{\cee^\times}\ar[d]^p\\
&\cee^\times}
\]

Labeled holomorphic maps can be composed: indeed, given maps $g_0: U_0\to U_1$ and $g_1: U_1\to U_2$ with labels, there is a unique labeling for $g_1\circ g_0$ satisfying the chain rule: $\widetilde{(g_1\circ g_0)'}(z) = \widetilde{g_1'}(g_0(z))\cdot \widetilde{g_0'}(z)$. With this understood, it is not hard to check that the group of labeled biholomorphic maps $\hh\to \hh$ is naturally isomorphic to $\SLt$. 

There is a natural action of a labeled biholomorphic map on $\hh\times \wt{\cee^\times}$: namely, if $\ul{g} = (g, \wt{g'})$ is a labeled biholomorphic map on $\hh$, then we set
\[
\ul{g}(z,u) = (g(z), \wt{g'}(z)\cdot u)
\]
for $(z,u)\in \hh\times\wt{\cee^\times}$. Since a biholomorphic map $\hh\to\hh$ is the same as a hyperbolic isometry, the construction means that the left action of $\SLt$ preserves the lifted unit tangent bundle $\widetilde{T^1\hh}\subset \hh \times \wt{\cee^\times}$, and this action is the same as the one studied in the proof of Theorem \ref{invcontact}. 

Observe that since a unit tangent vector at a point $z\in\hh$ is of the form $(z, (\im z)e^{i t})$, we can describe the lifted unit tangent bundle as the image of the set
\begin{equation}\label{liftedunit}
\{(z, t - i\log(\im z))\,|\, z\in \hh,\, t\in \arr\}\subset \hh\times \cee
\end{equation}
under the isomorphism $1\times \tilde e : \hh\times\cee\to \hh\times \wt{\cee^\times}$.

Modifying Milnor's terminology slightly, we define a {\it (holomorphic) differential of degree $k$} on an open set $U\subset \cee$ to be a function $\phi:U\times\mathbb{C}^{\times}\rightarrow\mathbb{C}$ of the form $\phi(z,u) = f(z)u^k$, where $f:U\rightarrow\mathbb{C}$ is holomorphic. Here $k$ must be an integer, since rational powers of complex numbers are multi-valued. Since we wish to consider forms of fractional degree as well, we pass to the universal cover. So for a rational number $a\in\cue$, we define a \textit{differential of degree $a$ on $U$} to be a function $\phi:U\times\wt{\cee^\times}\to\cee$ satisfying $\phi(z,u) = f(z)p(u^a)$, where $f:U\rightarrow\mathbb{C}$ is again holomorphic. Equivalently, if $u = \tilde{e}(w)$ for $w\in\cee$, we can write $\phi(z,w) = f(z)\,p(\tilde{e}(aw)) = f(z) e^{iaw}$.

Labeled holomorphic maps act on differentials by pullback: namely, for $\ul g$ labeled holomorphic and $\phi$ a differential of degree $a$, we define 
\[
\ul g^*\phi(z, u) = \phi(g(z), \wt{g'}(z)u).
\]
A differential $\phi$ will be said to be {\it $\tG$-automorphic}, for a subgroup $\tG< \SLt$, if $\ul g^*\phi = \phi$ for all $\ul g\in \tG$.

We will be interested in compact 3-manifolds that arise as the quotient of $\SLt$ by such a subgroup $\tG$, specifically those $\tG$ that act on $\hh$ cocompactly. Let us rewrite the exact sequence \eqref{ses} as
\[
1\to C \to \SLt\to \isom^+(\hh)\to 1,
\]
where $\isom^+(\hh)\cong PSL(2,\arr)$ indicates the orientation-preserving isometries and $C\cong \zee$ is the center of $\SLt$. Then $\tG< \SLt$ acts on $\hh$, factoring through the quotient $\tG/(\tG\cap C)$. For use below, we recall:

\begin{lemma}\label{Auto0} Suppose that $\widetilde{G}<\widetilde{SL(2,\mathbb{R})}$ is such that $G = \widetilde{G}/(\widetilde{G}\cap C)<PSL(2,\mathbb{R})$ acts on $\mathbb{H}$ with compact fundamental domain.  Then we have the following:
\begin{description}
\item[(i)] (\cite{milnor1975}, Lemma 5.4) $\widetilde{G}$ is a discrete subgroup of $\widetilde{SL(2,\mathbb{R})}$ and the space of right cosets $\wt G \bs \widetilde{SL(2,\mathbb{R})}$ is compact.  Also $\widetilde{G}\cap C$ is necessarily nontrivial.

\item[(ii)] (\cite{milnor1975}, Theorem 5.9) Two points $(z,w),(z',w')\in\mathbb{H}\times\mathbb{C}$ belong to the same $\widetilde{G}$-orbit iff $\phi(z,w) = \phi(z',w')$ for every $\widetilde{G}$-automorphic form $\phi$.
\end{description}
\end{lemma}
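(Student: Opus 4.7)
The plan is to establish, in sequence, discreteness of $G$ in $PSL(2,\arr)$, discreteness of $\wt G$ in $\SLt$, nontriviality of $\wt G \cap C$, and finally compactness of $\wt G \bs \SLt$. Discreteness of $G$ follows from the assumption that $G$ acts on $\hh$ with compact fundamental domain, so the action is properly discontinuous. Discreteness of $\wt G$ follows because $\wt G$ is contained in the full preimage of $G$ under $\SLt \to PSL(2,\arr)$, and preimages of discrete subgroups under coverings with discrete fiber ($C \cong \zee$) are themselves discrete. For nontriviality of $\wt G \cap C$, I would argue by contradiction: if $\wt G \cap C = \{1\}$ then $\wt G \to G$ is an isomorphism, giving a group-theoretic splitting of the central extension
\[
1 \to C \to H \to G \to 1,
\]
where $H$ denotes the preimage of $G$ in $\SLt$. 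This extension is classified by the Euler class of the principal $S^1$-bundle $G \bs T^1\hh \to G \bs \hh$, which is a nonzero multiple of $\chi(G \bs \hh)$ and hence nonzero for any cocompact Fuchsian group, so no such splitting can exist. Finally, for compactness of $\wt G \bs \SLt$, one has a fiber sequence
\[
C/(\wt G \cap C) \longrightarrow \wt G \bs \SLt \longrightarrow G \bs PSL(2,\arr),
\]
whose base is the compact unit tangent orbi-bundle over the compact orbifold $G \bs \hh$ and whose fiber $C/(\wt G \cap C)$ is finite cyclic (since $\wt G \cap C$ is a nontrivial subgroup of $C \cong \zee$). Hence the total space is compact.

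\textbf{Part (ii).} The forward direction is immediate from the definitions: if $(z',w') = \ul g(z,w)$ with $\ul g \in \wt G$ and $\phi$ is $\wt G$-automorphic, then $\phi(z',w') = \ul g^*\phi(z,w) = \phi(z,w)$. For the converse, the strategy is to construct, for any two points in distinct $\wt G$-orbits in $\hh \times \wt{\cee^\times}$, a $\wt G$-automorphic differential separating them. The tool is the Poincar\'e series: starting from a compactly supported holomorphic kernel $f(z)\,p(u^a)$ concentrated near one chosen orbit representative, the averaged sum
\[
\Theta_f(z,u) = \sum_{\ul g \in \wt G} \ul g^*\bigl(f(z)\,p(u^a)\bigr)
\]
is formally $\wt G$-automorphic of degree $a$ and converges absolutely for $a$ a sufficiently large rational number, by cocompactness of the action from part (i). If the $\hh$-projections of the two input points lie in distinct $G$-orbits, a choice of $f$ supported near one projection yields $\Theta_f$ taking distinct values on the inputs. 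If the $\hh$-projections agree modulo $G$ but the $\wt{\cee^\times}$-coordinates differ, a differential of fractional degree $a$ detects the difference through the factor $p(u^a)$, and the same averaging produces a separating form.

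\textbf{Main obstacle.} The hard part will be (ii). Although the Poincar\'e series construction is classical for integer-weight holomorphic forms on cocompact Fuchsian groups, ensuring that the fractional-degree analogues both converge and take distinct values at two prescribed points requires combining the uniform geometric estimates furnished by the compactness in (i) with a careful bump-function argument inside a fundamental domain for $\wt G \bs \SLt$. The underlying claim is that the algebra of $\wt G$-automorphic differentials of all rational degrees separates points on the quotient $\wt G \bs (\hh \times \wt{\cee^\times})$, and verifying this separation property is where most of the technical work would lie.
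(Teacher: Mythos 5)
First, a point of reference: the paper does not prove this lemma at all --- both parts are quoted directly from Milnor (his Lemma 5.4 and Theorem 5.9) and used as black boxes, so there is no internal argument to compare against. Judged on its own terms, your part (i) is correct and essentially complete: discreteness of $\wt G$ from discreteness of $G$ together with discreteness of the fiber $C$; nontriviality of $\wt G\cap C$ from the nonvanishing of the class of the central extension $1\to C\to \pi^{-1}(G)\to G\to 1$, which is the Euler class of the circle bundle $G\bs T^1\hh\to G\bs\hh$ and is nonzero for a cocompact hyperbolic group; and compactness of $\wt G\bs\SLt$ from the fibration with compact base $G\bs PSL(2,\arr)$ and finite fiber $C/(\wt G\cap C)$. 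The only loose end is at the very start: having a compact fundamental domain does not by itself force proper discontinuity, so the hypothesis must be read (as Milnor reads it) as saying that $G$ is a cocompact Fuchsian group; with that understanding (i) is fine.

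Part (ii) is where the substance lies, and while you have identified the right classical tool (Poincar\'e series), the steps you defer are not merely technical. The series $\sum_{\ul g\in\wt G}\ul g^*\bigl(f(z)\,p(u^a)\bigr)$ is not well defined as written: a central element $c^k\in\wt G\cap C$ acts on a degree-$a$ differential by the scalar $e^{2\pi i k a}$, so each coset of $\wt G\cap C$ contributes infinitely many terms of equal modulus and the sum over $\wt G$ diverges unless $a$ is constrained so that $\wt G\cap C$ acts trivially (i.e.\ $a\in\frac{1}{d}\zee$ when $c^d$ generates $\wt G\cap C$), after which one must sum over $G=\wt G/(\wt G\cap C)$ rather than over $\wt G$. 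More seriously, separating two points $(z,w)$ and $(z,w')$ with the same $\hh$-coordinate requires an automorphic form of an \emph{allowed} degree $a$ with $f(z)\neq 0$ and $p(w^a)\neq p(w'^a)$ simultaneously; at an elliptic fixed point of order $p$ the invariance relation forces $f(z)=0$ for all degrees $a$ outside a specific arithmetic progression, so one cannot get nonvanishing at a prescribed point from a bump-function-plus-averaging argument alone. Producing automorphic forms of prescribed degree with prescribed (non)vanishing --- which is exactly Milnor's construction of the forms $\phi_1,\phi_2,\phi_3$ with simple zeros at the vertex orbits --- is the theorem here, not a technicality, and your outline does not yet supply it.
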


We also have the following.

\begin{lemma}\label{invlemma} Let $\wt{G}<\wt{SL(2,\arr)}$ be a subgroup as in the previous lemma, and suppose $\phi: \hh\times\wt{\cee^\times}\to \cee$ is $\wt{G}$-automorphic. Then the function
\[
(z,t)\mapsto |\phi\circ j(z,t)|: \hh\times\arr \xrightarrow{j} \wt{T^1\hh}\subset\mathbb{H}\times\widetilde{\cee^\times}\xrightarrow{|\phi|}\arr
\]
is $\wt{G}$-invariant and independent of $t$, hence descends to a function $G\bs\hh\to \arr$.
\end{lemma}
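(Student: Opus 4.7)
The plan is to make the lemma transparent by computing $\phi\circ j$ in closed form; both assertions then reduce to inspection. Write $\phi$ as a differential of degree $a\in\cue$, so $\phi(z,u) = f(z)\,p(u^a)$ for a holomorphic $f:\hh\to\cee$. Using the parametrization $j(z,t) = (z,\tilde e(t - i\log \im z))$ from \eqref{liftedunit} together with the convention $p\circ\tilde e(w) = e^{iw}$, I compute
\[
\phi\circ j(z,t) \;=\; f(z)\cdot p\bigl(\tilde e(t - i\log \im z)^a\bigr) \;=\; f(z)\,e^{ia(t-i\log\im z)} \;=\; f(z)\, e^{iat}(\im z)^a.
\]
Taking the modulus kills the $e^{iat}$ factor, yielding $|\phi\circ j(z,t)| = |f(z)|\,(\im z)^a$, which is manifestly independent of $t$.

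For $\wt G$-invariance I would argue by equivariance of $j$: the lifted unit tangent bundle $\wt{T^1\hh}$ is a $\wt G$-invariant subset of $\hh\times\wt{\cee^\times}$ (since left multiplication on $\SLt$ corresponds under $\tilde\Phi$ to the labeled-map action, which sends unit vectors to unit vectors), and by construction $j$ is the diffeomorphism $\hh\times\arr\to\wt{T^1\hh}$ intertwining the $\wt G$-actions. The automorphy relation $\ul g^*\phi = \phi$ for $\ul g\in\wt G$ then gives $\phi\circ j\circ \ul g = \phi\circ j$, and in particular $|\phi\circ j|$ is $\wt G$-invariant. As a sanity check one can verify this directly from the closed formula: the automorphy equation combined with $|h'(z)| = |cz+d|^{-2}$ yields $|f(h(z))| = |f(z)|\,|cz+d|^{2a}$, and the standard identity $\im h(z) = \im z/|cz+d|^2$ then makes $|f(z)|(\im z)^a$ invariant under the action of the M\"obius transformation $h$ underlying $\ul g$.

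To descend, note that since $|\phi\circ j|$ is independent of $t$ it factors through the projection $\hh\times\arr\to\hh$. Elements of $\wt G\cap C$ act on $\hh\times\arr$ by pure shifts in the $\arr$-factor, so $\wt G$-invariance on $\hh\times\arr$ is equivalent to $G$-invariance of the induced function on $\hh$, where $G = \wt G/(\wt G\cap C) < PSL(2,\arr)$. This produces the well-defined function $G\bs\hh\to\arr$.

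The only real obstacle is bookkeeping across the various covers: keeping straight the convention $p\circ\tilde e(w)=e^{iw}$ when computing $p(u^a)$, and noting the precise way in which the cancellation $|f(h(z))|\,(\im h(z))^a = |f(z)|(\im z)^a$ encodes both the transformation rule implied by automorphy and the Möbius identity for imaginary parts. Once the explicit formula $|\phi\circ j(z,t)| = |f(z)|(\im z)^a$ is in hand, neither of these introduces any genuine difficulty.
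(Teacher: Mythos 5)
Your proof is correct and follows the same route as the paper: both compute $\phi\circ j(z,t) = f(z)(\im z)^a e^{iat}$ from the parametrization \eqref{liftedunit} and read off $t$-independence from the modulus, with the invariance following from automorphy and the equivariance of $j$ (which the paper dismisses as "clear" and you spell out, correctly, both abstractly and via the M\"obius identity). No gaps.
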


Here we write $G$ for $\widetilde{G}/(\widetilde{G}\cap C)$, and $j$ for the composition 
\[
(z,t) \mapsto (z, \tilde{e}(t - i\log(\im z))).
\]

\begin{proof} Write $\phi(z,w) = f(z) e^{iaw}$ for $(z,w)\in \hh\times \cee$ as above. Then from \eqref{liftedunit} we have
\[
\phi\circ j(z,t) = f(z) (\im z)^a\,e^{iat},
\]
whose absolute value depends only on $z$. The invariance is clear.

\end{proof}

\subsection{Triangle Groups and Central Extensions}\label{Bri2}

We now describe the construction of a particular family of groups $\Pi(p,q,r)< \SLt$, and sketch the proof that the quotient of $\SLt$ by the left action of $\Pi(p,q,r)$ is diffeomorphic to the Brieskorn sphere $\Sigma(p,q,r)$. Throughout, we suppose $p,q,r$ are relatively prime positive integers such that $\frac{1}{p} +\frac{1}{q} + \frac{1}{r} < 1$. This condition means that there is a (unique up to isometry) hyperbolic triangle in $\mathbb{H}$ with interior angles $\pi/p$, $\pi/q$, and $\pi/r$, which we denote by $T(p,q,r)$.  We will write $v_1$, $v_2$, and $v_3$ for the vertices of $T(p,q,r)$ adjacent to the interior angles $\pi/p$, $\pi/q$, and $\pi/r$, respectively, and assume these are arranged in counterclockwise cyclic order.  Also label the edges $e_1$, $e_2$, and $e_3$ of $T(p,q,r)$ so that $e_j$ is opposite to the vertex $v_j$. For $j= 1,2,3$, let $\sigma_j$ be the hyperbolic reflection of $\hh$ across the geodesic containing the edge $e_j$. The (full) Schwarz triangle group $\Delta^*(p,q,r)<\isom(\hh)$ is the group of hyperbolic isometries generated by $\sigma_1, \sigma_2$ and $\sigma_3$.  We will be primarily concerned with the index $2$ subgroup $\Delta(p,q,r)<\Delta^*(p,q,r)$ consisting of orientation-preserving isometries, which we will simply call the \textit{triangle group}.  The full Schwarz group $\Delta^*(p,q,r)$ has the presentation 
\[
\Delta^*(p,q,r) = \langle \sigma_1,\sigma_2,\sigma_3\,|\,\sigma_j^2 = 1, (\sigma_2\sigma_3)^p = (\sigma_3\sigma_1)^q = (\sigma_1\sigma_2)^r = 1\rangle,
\]
as shown in \cite{milnor1975}, for example.  Observe that, for instance, $\sigma_1\sigma_2$ is a reflection first in $e_2$ and then in $e_1$, which is seen to be a rotation by $\frac{2\pi}{r}$ radians about $v_3$.  It can be shown using the Reidemeister-Schreier theorem that the orientation-preserving subgroup has a presentation 
\[
\Delta(p,q,r) = \langle\delta_1,\delta_2,\delta_3\,|\,\delta_1^p = \delta_2^q = \delta_3^r = \delta_1\delta_2\delta_3 = 1\rangle,
\]
where $\delta_j = \sigma_k\sigma_{\ell}$, for $\{j,k,\ell\}$  a cyclic permutation of $\{1,2,3\}$.

The \textit{centrally extended triangle group} $\widetilde{\Delta}(p,q,r)$ is defined to be the preimage of $\Delta(p,q,r)$ in $\SLt$ under the covering projection $\SLt\to PSL(2,\mathbb{R})$.  Hence we see that there is a short exact sequence
\[
1\rightarrow{ C}\rightarrow\widetilde{\Delta}(p,q,r)\rightarrow\Delta(p,q,r)\rightarrow1
\]
where $C\cong\mathbb{Z}$ embeds as the center of $\wt\Delta(p,q,r)$.  There is also a group presentation 
\[
\widetilde{\Delta}(p,q,r) = \langle g_1,g_2,g_3\, |\, g_1^p = g_2^q = g_3^r = g_1g_2g_3 = c\rangle,
\]
 where $c$ generates $C$.  Finally define 
 \[
 \Pi(p,q,r) = [\widetilde{\Delta}(p,q,r),\widetilde{\Delta}(p,q,r)]. 
 \]
 By \cite[Corollary 3.2]{milnor1975}, $\Pi(p,q,r)$ has index $d=\frac{pqr}{s}$ in $\widetilde{\Delta}(p,q,r)$, where 
 \[
\ts \frac{1}{s} = 1-\frac{1}{p}-\frac{1}{q}-\frac{1}{r},\quad\mbox{so that}\quad d = pqr - qr-pr-pq.
 \]
 It can be shown that $ c^d$ generates the center of $\Pi(p,q,r)$, which is $\Pi(p,q,r)\cap C$, and that when $p,q,r$ are relatively prime, $\Pi(p,q,r)$ maps onto $\Delta(p,q,r)$. Thus we obtain another short exact sequence
\begin{equation}\label{pises}
1\rightarrow C_0\rightarrow\Pi(p,q,r)\rightarrow\Delta(p,q,r)\rightarrow1
\end{equation}
where $C_0<C$ is an infinite cyclic subgroup of index $d$.
If we take as known (for the moment) the fact that the Brieskorn sphere $\Sigma(p,q,r)$ is the quotient of $\SLt$ by the left action of $\Pi(p,q,r)$, then we can recognize this short exact sequence as the one in \cite[Lemma 3.2]{scott83}, where $\Pi(p,q,r) = \pi_1(\Sigma(p,q,r))$ and $\Delta(p,q,r)$ is the orbifold fundamental group of $\mathbb{H}/\Delta(p,q,r)$.  In effect this says that $c^d$ represents a regular fiber of the Seifert structure on $\Sigma(p,q,r)$ (see below), which is a generator of the infinite cyclic group $C_0$ appearing in the short exact sequence.  Indeed we can also use \cite[Corollary 3.2]{milnor1975} to show that $\Pi(p,q,r)$ contains the elements $g_1^d$, $g_2^d$, and $g_3^d$.  This shows that $(g_1^d)^p = (g_1^p)^d = c^d = (g_2^d)^q = (g_3^d)^r$.

The fact that $\Sigma(p,q,r)\cong\Pi(p,q,r)\bs\SLt$ hinges on the following lemma.

\begin{lemma}\label{Auto1} (\cite{milnor1975}) Let $p,q,r\geq 2$ and $\frac{1}{p}+\frac{1}{q}+\frac{1}{r}<1$.  There exist three $\Pi(p,q,r)$-automorphic forms on $\mathbb{H}$, $\phi_1$, $\phi_2$, and $\phi_3$ of degrees $s/p$, $s/q$, and $s/r$, respectively such that $\phi_i$ has a simple zero at each point in the orbit ${\Delta}(p,q,r)v_i$, and the $\phi_i$ generate the algebra of all $\Pi(p,q,r)$-automorphic forms.  Furthermore the $\phi_i$ satisfy the polynomial relation $\phi_1^p+\phi_2^q+\phi_3^r = 0$.
\end{lemma}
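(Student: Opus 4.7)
The plan is to construct the $\phi_i$ as sections of fractional line bundles on the orbifold quotient $\hh/\Delta(p,q,r) \cong S^2(p,q,r)$ (the orbifold sphere with three cone points of orders $p$, $q$, $r$) and then to establish the polynomial relation by a dimension count. As a first sanity check I would verify consistency of the claimed degrees with $\Pi$-automorphy: since $\Pi \cap C = C_0$ is generated by $c^d$ and $c$ acts on a degree-$a$ differential by the scalar $e^{2\pi i a}$ (from the shift $w \mapsto w + 2\pi$ on the $\wt{\cee^\times}$ factor), invariance under $C_0$ forces $ad \in \zee$. For $a = s/p$ one computes $ad = (s/p)(pqr/s) = qr$, and similarly for $a = s/q, s/r$. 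At each cone point $v_i$ of order $a_i$ (with $a_1=p$, $a_2=q$, $a_3=r$), the stabilizer $\langle g_i\rangle \subset \widetilde\Delta$ additionally imposes a congruence on the vanishing order at $v_i$ of any $\Pi$-automorphic differential of degree $a$, determined by the label of $g_i$; the degrees $s/a_i$ are chosen precisely so that the minimal admissible vanishing order is $1$ at $v_i$ and $0$ at the other two cone-point orbits.

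To construct each $\phi_i$, the $\Pi$-automorphic differentials of a given fractional degree correspond to sections of an explicit line bundle over the orbifold $S^2(p,q,r)$, and the zero-order constraints at the cone points cut out a nonempty subspace by orbifold Riemann--Roch, using the hyperbolic condition $\frac{1}{p}+\frac{1}{q}+\frac{1}{r} < 1$. Alternatively, a direct Poincar\'e-series construction on $\widetilde\Delta$ produces such a section. I pick any nonzero element to be $\phi_1$, and similarly construct $\phi_2$ and $\phi_3$. That the result is genuinely $\Pi$-automorphic (rather than merely $\Pi$-invariant up to a nontrivial character of $\widetilde\Delta$) uses the fact that $\Pi = [\widetilde\Delta,\widetilde\Delta]$ is the commutator subgroup and therefore lies in the kernel of every $1$-dimensional character of $\widetilde\Delta$.

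For the polynomial relation, observe that each $\phi_i^{a_i}$ is a $\widetilde\Delta$-automorphic form of integer degree $s$, vanishing simply on the $\Delta$-orbit of $v_i$ and nowhere else. Orbifold Riemann--Roch on the quotient $\hh/\widetilde\Delta$ yields that the space of $\widetilde\Delta$-automorphic forms of degree $s$ has dimension exactly $2$; hence the three elements $\phi_1^p$, $\phi_2^q$, $\phi_3^r$ satisfy a linear relation $\alpha \phi_1^p + \beta \phi_2^q + \gamma \phi_3^r = 0$ in which all three coefficients are necessarily nonzero (otherwise two of the $\phi_i^{a_i}$ would be proportional, contradicting the disjointness of their zero sets in $\hh$). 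After rescaling the $\phi_i$, this becomes $\phi_1^p + \phi_2^q + \phi_3^r = 0$. Generation of the full algebra of $\Pi$-automorphic forms by $\phi_1, \phi_2, \phi_3$ would then follow by comparing graded Hilbert series: the series for $\Pi$-automorphic forms (computed via orbifold Riemann--Roch in each fixed rational degree) should equal that of $\cue[x_1,x_2,x_3]/(x_1^p + x_2^q + x_3^r)$ with $\deg x_i = s/a_i$. Verifying this Hilbert-series identity, and leveraging it to deduce surjectivity of the natural map from the polynomial algebra onto the algebra of automorphic forms, is where I expect the main obstacle.
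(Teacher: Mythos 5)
You should know at the outset that the paper does not prove this lemma: it is quoted from Milnor \cite{milnor1975} (where it is assembled from his Sections 5--6) and used as a black box, so there is no in-paper argument to compare yours against. Judged on its own terms, your sketch follows the classical route (fractional-degree $\Pi$-automorphic differentials as sections of orbifold line bundles over $S^2(p,q,r)$, existence via Riemann--Roch or Poincar\'e series, the relation via a dimension count, generation via Hilbert series), and your opening consistency checks are correct: with the paper's conventions $c$ does act on a degree-$a$ differential by $e^{2\pi i a}$, and $(s/p)\,d = qr \in \zee$.

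There are, however, two concrete gaps. First, your step for the polynomial relation asserts that each $\phi_i^{a_i}$ is a ``$\tilde\Delta$-automorphic form of integer degree $s$,'' but $s = pqr/d$ is not an integer in general, even for pairwise coprime exponents: for $(p,q,r)=(2,3,11)$ one has $d=5$ and $s=66/5$. The forms $\phi_1^p,\phi_2^q,\phi_3^r$ are fractional-degree, $\Pi$-automorphic forms that are at best $\tilde\Delta$-\emph{semi}-invariant, transforming under $\tilde\Delta$ by characters of the finite abelian group $\tilde\Delta/\Pi$. Your two-dimensionality count must therefore be run not on the space of integer-degree $\tilde\Delta$-forms but on a single character eigenspace inside the degree-$s$ forms, and you must first check that the three powers lie in a \emph{common} eigenspace; this bookkeeping is exactly where the work is. (The observation that disjointness of the zero sets forces all three coefficients of the resulting linear relation to be nonzero is fine, though note $\phi_1^p$ vanishes to order $p$ at points of $\Delta v_1$, which is ``simple'' only in the orbifold sense.) Second, the generation statement is the main content of the lemma and your sketch explicitly defers it. Equality of Hilbert series alone does not give surjectivity of $\cue[x_1,x_2,x_3]/(x_1^p+x_2^q+x_3^r)\to\bigoplus_a A_a$; you also need injectivity, i.e.\ that the ideal of relations among the $\phi_i$ is exactly the principal ideal generated by the Brieskorn polynomial (which one gets from its irreducibility together with the fact that the $\phi_i$ cut out a two-dimensional image). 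As written the proposal is a plausible roadmap, but both the eigenspace identification and the generation step need to be supplied before it is a proof.
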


%Note that by our description of holomorphic differentials of fractional degree, we can write 
%\begin{eqnarray*}
%\ts\phi_1(z,w)& = &\ts f_1(z)\exp(\frac{is}{p}w)\\ 
%\phi_2(z,w) &=&\ts f_2(z)\exp(\frac{is}{q}w) \\
%\phi_3(z,w) &=& \ts f_3(z)\exp(\frac{is}{r}w)
%\end{eqnarray*}
%for holomorphic functions $f_1$, $f_2$, $f_3$ on $\hh$, where $(z,w)\in \hh\times \cee$.
%
%

It follows from the lemma that  the mapping $(\phi_1,\phi_2,\phi_3):\hh\times\wt{\cee^\times}\rightarrow\mathbb{C}^3$ has image contained in the {\it Brieskorn variety} defined by
\[
V(p,q,r) = \{x^p + y ^q + z^r = 0\}\subset \cee^3,
\]
and is invariant under the action of $\Pi(p,q,r)$.  Thus we get a well-defined holomorphic map 
\[
F: \Pi(p,q,r)\bs \hh\times\wt{\cee^\times} \to V(p,q,r) \bs\{0\}.
\]
That $F$ is one-to-one is a consequence of the second part of Lemma \ref{Auto0}; Milnor verifies it is biholomorphic. 

Now, we can map $\SLt$ into $V\bs\lbrace0\rbrace$ via the composition
\[
\SLt\cong\mathbb{H}\times\mathbb{R}\xrightarrow{\tilde\Phi}\mathbb{H}\times\widetilde{\cee^\times}\xrightarrow{F}V\bs\lbrace0\rbrace.
\]
The image of this map need not lie in $\Sigma(p,q,r)\subset V(p,q,r) - \{0\}$, but is diffeomorphic to it, as follows.
Let $\ul g = (g, \wt{g'})\in\SLt$, considered as a labeled holomorphic map $\hh\to\hh$ and identified with the point 
\[
\tilde{\Phi}(\ul g) = (g(i), \wt{g'}(i))= \ul g(i, 1) \in \hh\times \wt{\cee^\times},
\]
 and write $F([g(i,1)]) = (z_1,z_2,z_3)\in V\bs\lbrace0\rbrace$, where $[g(z,1)]$ indicates the orbit under the left action of $\Pi(p,q,r)$.  It is easy to see that the curve 
\[
t\mapsto F([\ul g(i, t^{1/s})]) = (t^{1/p}z_1,t^{1/q}z_2,t^{1/r}z_3), 
\]
for $t>0$, intersects $S^5\subset\mathbb{C}^3$ transversely and precisely once.  So we can define 
 \[
 \Psi:\Pi(p,q,r)\bs\SLt\rightarrow\Sigma(p,q,r) = V(p,q,r)\cap S^5
 \]
  by letting $\Psi(\Pi(p,q,r)\ul g)$ be this intersection point.  One verifies that $\Psi$ is smooth, injective, and that $d\Psi$ has maximal rank everywhere.  Since $\Pi(p,q,r)\bs\SLt$ is compact and $\Sigma(p,q,r)$ is connected it now follows from the inverse function theorem that $\Psi$ is a diffeomorphism.

This relates to another classical description of $\Sigma(p,q,r)$ as a Seifert fiber space.   Observe that there is a fixed point free action of $S^1$ on $V(p,q,r)$ given by $\zeta\cdot(z_1,z_2,z_3) = (\zeta^{qr}z_1,\zeta^{pr}z_2,\zeta^{pq}z_3)$ where $\zeta$ is a unit complex number, which restricts to an $S^1$ action on $\Sigma(p,q,r)$. Moreover, the diffeomorphism constructed above between $\Pi(p,q,r)\bs \hh\times \arr$ and $\Sigma(p,q,r)$ is equivariant, in the sense that 
\[
F\circ\tilde\Phi(z,t_0 + t) = e^{it/d}\cdot F\circ\tilde\Phi(z, t_0)
\]
(the diffeomorphism between the image of $F\circ \tilde\Phi$ and $\Sigma(p,q,r)$ clearly commutes with the $S^1$ action).

Hence we have an identification between $\Sigma(p,q,r)$ as a Seifert space and the quotient $\Pi(p,q,r)\bs \hh\times\arr$, where the action of $S^1$ on the latter is induced by $\arr$ translation. In our parametrization the action has period $2\pi d$, and dividing by the action gives a projection
\[
\Pi(p,q,r)\bs \hh\times\arr\to \Delta(p,q,r)\bs \hh \cong S^2(p,q,r)
\]
(c.f. \eqref{pises}). Here $S^2(p,q,r)$ denotes the orbifold with underlying space $S^2$ and three cone points of orders $p$, $q$, and $r$ corresponding to the vertices of the initial triangle. The circle fibers over these points, consisting of points with nontrivial stabilizer in $S^1 = \arr/ 2\pi d\zee$, have period $2\pi d/p$, $2\pi d/q$, and $2\pi d/r$ respectively. In terms of the fundamental group these exceptional fibers are conjugate to the generators $g_i^d$.

\subsection{Contact Structures}\label{contstrident}

In section \ref{Bri1} we described a contact form $\lambda$ on $\hh\times \arr$ invariant under the left action of $\SLt$. In particular, fixing relatively prime integers $p,q,r$ as above, the quotient $Y = \Pi(p,q,r)\bs \hh\times\arr$ inherits a contact form that we also denote by $\lambda$. On the other hand the description of the Brieskorn sphere $\Sigma(p,q,r)$ as the link of the singularity in $V(p,q,r)$ also determines a contact structure, as the field of complex tangencies. The latter is also known as the {\it Milnor fillable} contact structure on $\Sigma(p,q,r)$. In this section we show that these two contact structures correspond under the diffeomorphism $\Psi$ constructed above. 

Indeed, observe that the Reeb vector field for $\lambda$ is (the image of) $\partial_t$, which is the infinitesimal generator of the circle action on $Y$. On the other hand, a contact form for the natural contact structure on $\Sigma(p,q,r)$ is the restriction of the form 
\[
\alpha_{p,q,r} = \frac{i}{4} (p(z_1d\bar{z}_1-\bar{z}_1 dz_1)+q(z_2d\bar{z}_2-\bar{z}_2 dz_2)+r(z_3d\bar{z}_3-\bar{z}_3 dz_3))
\]
from $\cee^3$ to $\Sigma(p,q,r)$ (see \cite[Section 7.1]{geigesbook}, for example), and the corresponding Reeb vector field is
\[
R_{\alpha_{p,q,r}}  = 2i\bigg(\frac{1}{p}(z_1\partial_{z_1}-\bar{z}_1 \partial_{\bar{z}_1})+\frac{1}{q}(z_2\partial_{z_2}-\bar{z}_2\partial_{\bar{z}_2})+\frac{1}{r}(z_3\partial_{z_3}-\bar{z}_3\partial_{\bar{z}_3})\bigg)
\]
This vector field generates the flow
\[
\varphi_t(z_1,z_2,z_3) = (e^{2it/p}z_1, e^{2it/q}z_2, e^{2it/r}z_3)
\]
 which, up to a constant reparametrization, is the same as the $S^1$ action on $\Sigma(p,q,r)$ described above. 
 
It follows that the $S^1$-equivariant diffeomorphism $\Psi: Y\to \Sigma(p,q,r)$ pulls back $\alpha_{p,q,r}$ to a contact form on $Y$ whose Reeb vector field is a (constant) multiple of the Reeb field of $\lambda$. From this it is easy to see that the family $(1-t)\lambda + t\Psi^*\alpha_{p,q,r}$ is a path of contact forms and hence by Gray stability the corresponding contact structures are isotopic.

\begin{corollary} The Milnor fillable contact structure on $\Sigma(p,q,r)$ (for $\frac{1}{p} + \frac{1}{q} + \frac{1}{r} < 1$) is hypertight.
\end{corollary}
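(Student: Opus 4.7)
The plan is to exhibit a hypertight contact form defining the Milnor fillable contact structure, by transporting $\lambda$ (descended to $Y := \Pi(p,q,r)\bs(\hh\times\arr)$) across the contactomorphism produced above. Concretely, the discussion preceding the corollary shows that $\ker\lambda$ on $Y$ is isotopic, hence by Gray stability contactomorphic, to the Milnor fillable structure $\ker\alpha_{p,q,r}$ on $\Sigma(p,q,r)$. Pushing $\lambda$ forward through such a contactomorphism yields a contact form on $\Sigma(p,q,r)$ whose Reeb orbits are precisely the diffeomorphic images of those of $\lambda$; since contractibility is a diffeomorphism invariant, it suffices to verify that $\lambda$ on $Y$ is itself hypertight.

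For this, the Reeb vector field of $\lambda$ on $\hh\times\arr$ is $\partial_t$ (Theorem \ref{invcontact}), which descends to the infinitesimal generator of the Seifert $S^1$-action on $Y$. Therefore every closed Reeb orbit is a (possibly iterated) Seifert fiber. By the identification established in Section \ref{Bri2}, the simple regular fiber represents $c^d$, the generator of $C_0 = \Pi(p,q,r)\cap C$, while the simple exceptional fiber over the order-$p_i$ cone point ($p_1,p_2,p_3 = p,q,r$) is conjugate to $g_i^d$. The $n$-th iterate ($n\geq 1$) of the regular fiber thus represents $c^{nd}$, and the $n$-th iterate of an exceptional fiber is conjugate to $g_i^{nd}$. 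Since contractibility is conjugation-invariant, hypertightness reduces to showing these elements are nontrivial in $\SLt$.

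The element $c$ generates the infinite cyclic center $C\subset \SLt$, so $c^{nd}\neq 1$ for $n\geq 1$. For the exceptional cases, use the relation $g_i^{p_i} = c$ from the presentation of $\wt\Delta(p,q,r)$: if $g_i^{nd} = 1$ in $\SLt$, then projecting to $PSL(2,\arr)$ where $g_i$ has order $p_i$ forces $p_i \mid nd$, so that $c^{nd/p_i} = g_i^{nd} = 1$; but $c$ has infinite order, forcing $nd = 0$, which is impossible. Hence every closed Reeb orbit of $\lambda$ on $Y$ is non-contractible, so $\lambda$ is hypertight, completing the proof. The only substantive obstacle is the group-theoretic bookkeeping converting iterated Seifert fibers into specific elements of $\Pi(p,q,r)$, but this translation has already been supplied by the Seifert-fibration description and the presentation of $\wt\Delta(p,q,r)$, so the remaining step is elementary.
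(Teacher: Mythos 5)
Your proposal is correct, but it takes a more roundabout route than the paper. The paper's proof is essentially one line: since the Reeb field of $\lambda$ is $\partial_t$, every closed Reeb orbit on $\Sigma(p,q,r)\cong \Pi(p,q,r)\bs(\hh\times\arr)$ lifts to a segment of a vertical line in the simply connected space $\hh\times\arr$; such a lift is never a closed loop, so no closed orbit is contractible. You instead identify the conjugacy class in $\pi_1\cong\Pi(p,q,r)$ represented by each (iterated) Seifert fiber --- $c^{nd}$ for regular fibers, $g_i^{nd}$ for exceptional ones --- and verify nontriviality from the presentation of $\wt\Delta(p,q,r)$ together with the infinite order of the central element $c$. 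Both arguments are sound (and your group-theoretic steps check out: $g_i^{nd}=1$ would force $p_i\mid nd$ via the projection to $PSL(2,\arr)$ and then $c^{nd/p_i}=1$, contradicting $C\cong\zee$). What your version buys is sharper information --- the explicit free homotopy classes of the orbits --- which the paper in fact needs later (e.g.\ in the proof of Theorem \ref{boundarythm}, where orbits are sorted by the class of a multiple of the regular fiber); what it costs is the bookkeeping, all of which is rendered unnecessary for the corollary itself by the simple observation that vertical lines in the universal cover are not loops. Your opening paragraph, transporting $\lambda$ across the contactomorphism supplied by Gray stability, is also a more careful reduction than the paper makes explicit (the paper simply blurs the identification), and is a worthwhile clarification.
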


\begin{proof} We have just seen that the Reeb orbits on $\Sigma(p,q,r)$ lift to orbits of $\partial_t$ on $\hh\times \arr$. In particular no Reeb orbit lifts to a loop in the universal cover $\hh\times \arr$, so none are contractible.  
\end{proof}

\section{Non-degenerate Contact Form and Conley-Zehnder Indices}\label{perturbsec}

In light of the results from the previous section, we will now blur the distinction between the Brieskorn sphere $\Sigma(p,q,r)$ (the link of the singularity $V(p,q,r)$) and the quotient $Y = \Pi(p,q,r)\bs\SLt$. In particular, we use the contact form $\lambda=\lambda_{p,q,r}$ induced from $dt + \frac{1}{y} dx$ on $\hh\times \arr \cong \SLt$ as in Theorem \ref{invcontact}. As we have seen, the Reeb vector field of $\lambda_{p,q,r}$ generates the circle action in the Seifert fibration $\Sigma(p,q,r)\rightarrow S^2(p,q,r)$.  Consequently, $\lambda_{p,q,r}$ has no isolated Reeb orbits.  In this section we define a perturbation of $\lambda_{p,q,r}$ to give a nondegenerate contact form on $\Sigma(p,q,r)$, which we can then use to compute contact homology.  Strictly, our construction gives a contact form that is nondegenerate ``up to large action'' in the sense of Nelson \cite{Ne18}, but that will be sufficient for our purposes.

Let $\pi:\Sigma(p,q,r)\rightarrow S^2(p,q,r)$ denote the Seifert fibration, where as before $S^2(p,q,r) = \Delta(p,q,r)\bs\hh$ is the 2-sphere with three orbifold singularities of orders $p,q$ and $r$. To make certain arguments more convenient later on, recall that a classical result of Fox \cite{fox52} (see also \cite{selberg}) implies that the triangle group $\Delta(p,q,r)$ contains a subgroup of finite index that acts freely on $\hh$. If $G(p,q,r)$ is such a subgroup, this means that $G(p,q,r)\bs\hh$ is a smooth closed surface that is a finite (orbifold) cover of $S^2(p,q,r)$. Letting $\wt G(p,q,r) < \Pi(p,q,r)$ be the lifted subgroup \eqref{pises}, it follows that $\wt G(p,q,r)\bs (\hh\times \arr) \to G(p,q,r)\bs\hh$ is a smooth principal $S^1$ bundle. By way of notation we write $E(p,q,r) = \wt G(p,q,r)\bs(\hh\times \arr)$ and $B(p,q,r) = G(p,q,r)\bs\hh$, so there is a commutative diagram
\[
\xymatrix{
E(p,q,r) \ar[r]^\EtoSg\ar[d]_\EtoB & \Sigma(p,q,r) \ar[d]^{\SgtoS2}\\
B(p,q,r)\ar[r]_{\BtoS2} & S^2(p,q,r)
}
\]
in which the top arrow is a map of $S^1$ spaces and a finite sheeted covering map, the vertical arrows are quotients by $S^1$ actions, and the bottom arrow is a finite orbifold covering (the choice of the group $G(p,q,r)$ will not affect our arguments).

  We will define a function $H:S^2(p,q,r)\rightarrow\mathbb{R}$ that is smooth away from the singular points $[v_i]$, such that $H\circ\BtoS2$ is a Morse function and such that $(1+\epsilon (H\circ\SgtoS2)^*)\lambda_{p,q,r}$ is nondegenerate for small $\epsilon$.  We will construct $H$ in stages, first defining it near the singular locus of $S^2(p,q,r)$, then extending it as a Morse function on the rest of $S^2(p,q,r)$.  The form $(1+\epsilon (H\circ\SgtoS2)^*)\lambda_{p,q,r}$ will have isolated Reeb orbits that project to the points $[v_i]\in S^2(p,q,r)$.  Equivalently, these Reeb orbits correspond to the singular $S^1$-fibers in $\Sigma(p,q,r)$.  There will be some other non-degenerate Reeb orbits of the perturbed contact form also, corresponding to critical points of $H$, but since $H\circ\BtoS2$ is Morse we can analyze these using techniques from \cite{Ne18}.  For concreteness, we will begin by working over the point $[v_1]$ corresponding to the vertex of $T(p,q,r)$ adjacent to the interior angle $\frac{\pi}{p}$.

\subsection{Local Perturbations}\label{Non1}
Our strategy to produce a function $H: S^2(p,q,r)\to \arr$ as above is essentially to look for functions $\hh\times\arr\to \arr$ that are invariant both under the action of $\Pi(p,q,r)$ and under all vertical translations. If $f$ is any such invariant function and is positive, then $f\lambda$ is still an invariant contact form defining the same contact structure. 

For any positive real-valued function $f$ on $\hh\times\arr$ it is not hard to compute the Reeb vector field of $f\lambda$.  Namely, we have  $R_{f\lambda} = \frac{1}{f}R_{\lambda}+V_\frac{1}{f}$ where the ``contact Hamiltonian vector field'' $V_\frac{1}{f}$ is the unique vector field satisfying (i): $V_\frac{1}{f}\in\ker\lambda$ and  (ii): $d\lambda(V_\frac{1}{f},\cdot) = d(\frac{1}{f})(R_{\lambda})\lambda-d(\frac{1}{f})$.  One easily checks that for our standard $\lambda$, 
\[ 
V_\frac{1}{f} = \begin{pmatrix} -y^2\partial_y\frac{1}{f} \\ -y\partial_{t}\frac{1}{f}+y^2\partial_x\frac{1}{f} \\ y\partial_y\frac{1}{f}\end{pmatrix}
\]
from which it follow for any positive function $f$,
\[
R_{f\lambda} = \begin{pmatrix} -y^2\partial_y\frac{1}{f} \\ -y\partial_{t}\frac{1}{f}+y^2\partial_x\frac{1}{f}\\ \frac{1}{f}+y\partial_y\frac{1}{f}\end{pmatrix}
\]

Now consider the function $f = 1+\epsilon|\phi_1|^2$ defined on $\mathbb{H}\times\mathbb{R}$, where $\phi_1$ is the $\Pi(p,q,r)$-automorphic form with a simple zero at $[v_1]$ described in section \ref{Bri1}, for some small $\epsilon>0$ (in particular, where convenient we assume $\epsilon \in (0,1)$).  From Lemma \ref{invlemma} we know $|\phi_1|$ restricts to a function on $\mathbb{H}\times\mathbb{R}\cong\SLt$ that is independent of $t\in\arr$.  Therefore we can write
\[
R_{f\lambda} = \begin{pmatrix} y^2\frac{\epsilon\partial_y|\phi_1|^2}{(1+\epsilon|\phi_1|^2)^2} \\ -y^2\frac{\epsilon\partial_x|\phi_1|^2}{(1+\epsilon|\phi_1|^2)^2} \\ \frac{1}{1+\epsilon|\phi_1|^2}-\frac{\epsilon\partial_y|\phi_1|^2}{(1+\epsilon|\phi_1|^2)^2}\end{pmatrix}
\]
Since $\phi_1$ vanishes at points of the $\Delta(p,q,r)$-orbit of $v_1$, we see that the first two entries of $R_{f\lambda}$ vanish at any point $(v,t)$ for $v$ in that orbit, and in fact $R_{f\lambda}(v,t) = R_{\lambda}(v,t) = \partial_t$ for such $v$. The induced contact form on $\Sigma(p,q,r)$, which we will abusively write as $f\lambda_{p,q,r}$, therefore has a closed Reeb orbit whose image is the $S^1$ fiber over the point $[v_1]\in S^2(p,q,r)$. Moreover, since a regular fiber in $\Sigma(p,q,r)$ corresponds to a generic vertical segment in $\hh\times \arr$ of length $2\pi d$, we have that this closed orbit has minimal period $2\pi d/p = 2\pi qr/s$ (this is essentially the statement that the element $g_1^d\in\Pi(p,q,r)$ has $(g_1^d)^p = c^d$; see the comments after \eqref{pises}).

Let us write $\gamma_p$ for the Reeb orbit of period ${2\pi d/p}$ lying over $[v_1]$. We wish to see that for generic small $\epsilon$, this orbit is nondegenerate (and similar for its iterates). The basic observation here is that the projection of the Reeb vector field to $\mathbb{H}$ is simply the symplectic gradient $X_{1/f}$ of $\frac{1}{f} = \frac{1}{1+\epsilon_1|\phi_1|^2}$. Indeed, since $f$ is independent of $t$, the projection of $R_{f\lambda}$ is well-defined and given by
\begin{equation}\label{Xdef}
X_{1/f} = \pi_*(R_{f\lambda}) = \begin{pmatrix} -y^2\partial_y(f^{-1}) \\ y^2\partial_x(f^{-1})\end{pmatrix},
\end{equation}
which is easily seen to satisfy 
\[
\iota_{X_{1/f}}\omega = -d(\ts f^{-1})
\]
for the symplectic form $\omega = \frac{1}{y^2} dx\wedge dy$ on $\hh$. Thus the Reeb flow of $f\lambda$ is a lift of the Hamiltonian flow associated to $f^{-1}$. Similarly, identifying $\xi_{(z,t)}$ with $T_z\hh$ using the symplectic isomorphism given by $d\pi$, the derivative of the time-$t$ flow of $R_{f\lambda}$ (strictly, the restriction of the derivative to the contact planes) is identified with the derivative of the time-$t$ flow of $X_{1/f}$.

\begin{lemma} For any integer $N>0$ there is finite set $\S_N\subset(0,1)$ such that for any $\epsilon\in (0,1) - \S_N$, and any $n\in\{1,\ldots, N\}$, the orbit $\gamma_1^n$ in $\Sigma(p,q,r)$ is a nondegenerate elliptic orbit of $R_{f\lambda}$, where $f = \epsilon |\phi_1|^2 +1$ as above.
\end{lemma}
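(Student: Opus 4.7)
The plan is to reduce the linearized return map of $\gamma_1^n$ to a product of two rotations on $\xi_{\gamma_1(0)}\cong T_{v_1}\hh$: one coming from the linearized Hamiltonian flow of $1/f$ at $v_1$, the other from the derivative of a deck transformation that closes up $\gamma_1$. Nondegeneracy and ellipticity then follow from a genericity argument in $\epsilon$.

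First I will set up the reduction. Since $f = 1 + \epsilon|\phi_1|^2$ is $\Pi(p,q,r)$-invariant and $t$-independent (Lemma~\ref{invlemma}), $f\lambda$ descends to $\Sigma(p,q,r)$, its Reeb field projects to the Hamiltonian vector field $X_{1/f}$ of \eqref{Xdef}, and $R_{f\lambda}=\partial_t$ along the vertical line over $v_1$ (since $\phi_1(v_1)=0$); that line descends to $\gamma_1$ of period $T=2\pi d/p$. Under the coordinate identification $d\pi:\xi\to T\hh$, the time-$T$ linearized Reeb flow in the cover restricts on $\xi$ to $d\varphi_T|_{v_1}$, the linearization of the Hamiltonian flow of $1/f$ at $v_1$. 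The intrinsic return map in $\Sigma$ is obtained by post-composing with $dg_1^{-d}|_{(v_1,T)}$, the derivative of the deck transformation $g_1^{-d}\in\Pi(p,q,r)$ that takes $(v_1,T)$ back to $(v_1,0)$ in the cover.

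Second I will compute the two factors as rotations in the complex structure on $T_{v_1}\hh$. By Lemma~\ref{invlemma}, $|\phi_1|^2(z)=|f_1(z)|^2(\im z)^{2s/p}$ where $\phi_1(z,w)=f_1(z)e^{isw/p}$, and by Lemma~\ref{Auto1} the function $f_1$ has a simple zero at $v_1$, so $f_1(z)=c(z-v_1)+O((z-v_1)^2)$ with $c\neq 0$. Consequently the Hessian of $|\phi_1|^2$ at $v_1$ is the positive scalar multiple $2|c|^2(\im v_1)^{2s/p}$ of the identity. Writing $H=1/f=1-\epsilon|\phi_1|^2+O(\epsilon^2)$ and differentiating \eqref{Xdef} shows that $DX_H|_{v_1}$ is trace-free with eigenvalues $\pm i\epsilon\alpha$, where $\alpha = 2(\im v_1)^{2+2s/p}|c|^2 > 0$ depends only on $p,q,r$; hence $d\varphi_T|_{v_1}$ acts as multiplication by $e^{-iT\epsilon\alpha}=e^{-2\pi i d\epsilon\alpha/p}$. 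The factor $dg_1^{-d}|_\xi$ is the derivative at $v_1$ of the M\"obius rotation $\delta_1^{-d}$, i.e.\ multiplication by $e^{-2\pi i d/p}$. Composing and iterating $n$ times, the return map of $\gamma_1^n$ is multiplication by $e^{-2\pi i n(d/p)(1+\epsilon\alpha)}$ on $T_{v_1}\hh\cong\cee$.

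Finally, the eigenvalues $e^{\pm 2\pi i n(d/p)(1+\epsilon\alpha)}$ always lie on the unit circle, so $\gamma_1^n$ is elliptic whenever nondegenerate; they equal $1$ precisely when $n(d/p)(1+\epsilon\alpha)\in\zee$, i.e.\ $\epsilon\in\{(pk/(nd)-1)/\alpha:k\in\zee\}$. For each $n\le N$ this intersects $(0,1)$ in a finite set, and the union over $n=1,\ldots,N$ is the required finite set $\mathcal{S}_N$. The principal technical point is Step~2: extracting from Milnor's simple-zero statement (Lemma~\ref{Auto1}) that the Hessian of $|\phi_1|^2$ at $v_1$ is not merely nondegenerate but a positive scalar multiple of the identity, so that both factors of the return map become rotations in a common complex structure and their eigenvalues combine cleanly.
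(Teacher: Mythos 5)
Your proof is correct, and its core is the same computation as the paper's: both arguments reduce to the linearization of the Hamiltonian vector field of $1/f$ at the fixed point $v_1$, use the simple zero of $\phi_1$ to show that this linearization is an infinitesimal rotation of angle proportional to $\epsilon$ (your observation that the Hessian of $|\phi_1|^2$ at $v_1$ is a positive scalar multiple of the identity is exactly what the paper extracts by checking that $\partial_z\partial_z(f^{-1})$ and $\partial_{\bar z}\partial_{\bar z}(f^{-1})$ vanish at $v_1$ in the $\partial_z,\partial_{\bar z}$ basis), and then exclude the finitely many $\epsilon\in(0,1)$ for which the resulting rotation is trivial for some $n\le N$. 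The one substantive difference is your treatment of the return map: you correctly compose the time-$T$ linearized flow with the derivative of the deck transformation $g_1^{-dn}$, contributing the holonomy rotation $e^{-2\pi i nd/p}$, whereas the paper identifies $\xi_{(v_1,0)}$ with $\xi_{(v_1,T)}$ by a ``vertical translation'' and so sees only the $O(\epsilon)$ rotation at this stage; since $g_1^{dn}$ projects to the nontrivial rotation $\delta_1^{dn}$ of $\hh$ whenever $p\nmid n$, your version computes the actual eigenvalues of the return map, and hence the correct excluded set $\mathcal{S}_N$, while the paper's version only recovers this holonomy factor later, through the rotation of the invariant frame $\{e_1,e_2\}$, when computing Conley--Zehnder indices. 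Both routes yield the finiteness and ellipticity claimed in the lemma (the return map is a rotation in either accounting), but your decomposition into two commuting rotations in the common complex structure on $T_{v_1}\hh$ makes the eigenvalue bookkeeping cleaner and is consistent with the total rotation angle $-T-\epsilon'(T)$ that the paper ultimately uses in Proposition~\ref{excepCZ}.
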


\begin{proof} Let $T = 2\pi\frac{qr}{s} n$, so that $\gamma_1^n$ is a closed orbit of $R_{f\lambda}$ having period $T$ (independent of $\epsilon$). We wish to choose $\epsilon$ such that that the derivative of the flow map $\xi_{(v_1,0)}\to \xi_{(v_1,T)}$ does not have $1$ as an eigenvalue; here we use a vertical translation (lying in the group $\Pi(p,q,r)$) to identify $(v_1,0)$, $(v_1,T)$, and the corresponding point of $\Sigma(p,q,r)$. By the preceding remarks, this is equivalent to the same property holding for the derivative of the time-$T$ flow of $X_{1/f}$ at the point $v_1$ (fixed under the flow of $X_{1/f}$).

Let us consider $X_{1/f}$ as a smooth map $\hh\to \arr^2$. In other words, we work on the universal cover of the orbifold $S^2(p,q,r)$; this is justified since all our constructions are induced from equivariant ones on $\hh\times\arr$, and the analysis is local. If $\Phi_T: \hh\to\hh$ denotes the time $T$ flow of $X_{1/f}$, it is an exercise to see that $(d\Phi_T)_{v_1} = e^{TA}$, where $A = (dX_{1/f})_{v_1}$ is the linearization of $X_{1/f}$ at $v_1$. To compute this linearization, it is convenient to switch to the basis $\partial_z, \partial_{\bar z}$ for $T_\cee\hh$, using $\partial_x = \partial_z+\partial_{\overline{z}}$ and $\partial_y = i(\partial_z-\partial_{\overline{z}})$. We find that 
\[
X_{1/f} = y^2(2i\partial_{\bar z}(f^{-1}) \partial_z - 2i \partial_z(f^{-1})\partial_{\bar z}).
\]
Observe that with $f = 1 + |\phi_1|^2$, since $\phi_1$ has simple zero at $v_1$, the first derivatives of $f^{-1}$ vanish at $v_1$. Neglecting terms containing only the first derivatives of $f^{-1}$, then, the linearization of $X_{1/f}$ at $v_1$ becomes 
\[
\begin{pmatrix} d(2iy^2 \partial_{\bar z}(f^{-1})) \\ d(-2iy^2 \partial_z (f^{-1})) \end{pmatrix}_{v_1}
 = \begin{pmatrix}2iy^2 \partial_z\partial_{\bar z} (f^{-1}) & 2iy^2 \partial_{\bar z}\partial_{\bar z} (f^{-1}) \\ 
 - 2iy^2 \partial_z\partial_z (f^{-1}) & 
  -2iy^2 \partial_{\bar z}\partial_z (f^{-1})
 \end{pmatrix}_{v_1}
 \]
 Now compute at $v_1$ using that $\phi_1$ is holomorphic:
 \[
 \partial_z \partial_z(f^{-1})|_{v_1} = \partial_z(-(1+\epsilon|\phi_1|^2)^{-2}\epsilon\bar\phi_1\partial_z\phi) |_{v_1}= \begin{pmatrix}\mbox{terms all} \\\mbox{involving $\bar\phi_1(v_1)$} \end{pmatrix}= 0
\]
and similarly $\partial_{\bar z}\partial_{\bar z} (f^{-1})$ vanishes at $v_1$. On the other hand, at $v_1$ we have
\[
\partial_z\partial_{\bar z}(f^{-1}) = \partial_z(-(1+\epsilon|\phi_1|^2)^{-2} \epsilon\phi_1\partial_{\bar z}\bar\phi_1) = -\epsilon|\partial_z\phi_1|^2,
\]
and similarly for $\partial_{\bar z}\partial_{z}(f^{-1})$. Hence in the basis $\{\partial_z, \partial_{\bar z}\}$, 
\[
(dX_{1/f})_{v_1} = \begin{pmatrix} -2i(v_1^y)^2 \epsilon|\partial_z\phi_1|^2 & 0 \\
0 & 2i(v_1^y)^2\epsilon|\partial_z\phi_1|^2 \end{pmatrix}_{z = v_1},
\]
where we have written $v_1^y$ for the $y$-coordinate of the vertex $v_1$. Returning to the real basis $\{\partial_x, \partial_y\}$, this becomes 
\[
(dX_{1/f})_{v_1} = \begin{pmatrix} 0 & 2\epsilon (v_1^y)^2 |\partial_z\phi_1(v_1)|^2 \\
-2\epsilon (v_1^y)^2 |\partial_z\phi_1(v_1)|^2 & 0 \end{pmatrix}.
\]

The exponential of ($T$ times) this matrix, which gives the linearization of the flow of $X_{1/f}$, is easily seen to be a clockwise rotation proportional to $\epsilon$; specifically, $(d\Phi_T)_{v_1}$ is a clockwise rotation through the angle $2(v_1^y)^2|\partial_z\phi_1|^2T\epsilon$. Taking $T = 2\pi\frac{qr}{s}n$, there are clearly finitely many values of $\epsilon\in (0,1)$ that result in a trivial rotation for some value of $n\leq N$; all others give non-identity rotations and hence the corresponding $\gamma_1^n$ are nondegenerate.
\end{proof}

\subsection{Conley-Zehnder index of an exceptional orbit}

Continuing with the notation above, we wish to calculate the Conley-Zehnder index of the nondegenerate Reeb orbit $\gamma_1^N$ associated to the perturbed contact form $f\lambda$. From the preceding subsection we know that $\gamma_1^N$ is an elliptic orbit (the linearized return map is a rotation), so its Conley-Zehnder index will be odd. Strictly the value of $CZ(\gamma_1^N)$ depends on the choice of trivialization of $\xi$ along $\gamma_1^N$; here and throughout we will use the global trivialization coming from the basis $\{e_1,e_2\}$ described in Proposition \ref{trivprop}: since the basis is $\SLt$-invariant, it descends to a trivialization of $\xi$ on $\Sigma(p,q,r)$. In fact, it is equivalent to work in the universal cover $\hh\times \arr$ and consider the linearized flow along the path $(v_1,t)$, $t\in [0,T]$, where $T = 2\pi\frac{qr}{s}N$.

With the work we've already done, the calculation is fairly trivial. We can identify the contact planes in $\hh\times \arr$ with the tangent planes to $\hh\times\{0\}$ via a vertical projection;  we've seen that the  time-$t$ Reeb flow projects to the time-$t$ Hamiltonian flow of $X_{1/f}$, which, in the ``obvious'' (non-invariant but constant) basis $\{\partial_x, \partial_y\}$, has linearization that is a clockwise rotation proportional to $\epsilon t$. In particular, choosing $\epsilon\ll (2(v_1^y)^2|\partial_z\phi_1|^2T)^{-1}$, the Reeb flow in this trivialization completes no full revolutions at all. On the other hand, lifting to $\hh\times \arr$ the invariant basis $\{ e_1,e_2\}$ rotates (compared to the lift of $\{\partial_x,\partial_y\}$) through the angle $T$ as $t$ ranges from $0$ to $T$. Therefore with respect to $\{e_1,e_2\}$ the Hamiltonian flow revolves through an angle of $-T-\epsilon'(T)$
where $\epsilon'(T)>0$ depends on $T$ but is bounded above in terms of $\epsilon$.

\begin{proposition}\label{excepCZ} Let $N>0$ be an integer. Then there exists $\epsilon_0>0$ such that for all $0<\epsilon< \epsilon_0$ the curve $\gamma_1^n$ is a nondegenerate Reeb orbit for the contact form $f\lambda$ on $\Sigma(p,q,r)$ for all $n=1,\ldots, N$, where $f = 1+\epsilon |\phi_1|^2$ as above. Moreover, the Conley-Zehnder index with respect to the universal trivialization $\tau$ of Proposition \ref{trivprop} is given by
\[
CZ_\tau(\gamma_1^n) = -2\lfloor n\ts\frac{qr}{s}\rfloor - 1.
\]

\end{proposition}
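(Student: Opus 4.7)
The plan is to combine the nondegeneracy statement from the previous lemma with the rotation computation carried out in the paragraph immediately preceding the proposition, and then feed the result into the elliptic case of the Conley-Zehnder index formula recalled in Section \ref{backgroundsec}. Essentially all the heavy lifting has already been done; what remains is a careful choice of $\epsilon_0$ and a floor computation.

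First, I would fix $\epsilon_0 \in (0,1)$ small enough that two conditions hold for every $n \in \{1,\ldots,N\}$. Condition (a): $(0,\epsilon_0) \cap \S_N = \emptyset$, so the previous lemma guarantees that $\gamma_1^n$ is a nondegenerate elliptic Reeb orbit of $R_{f\lambda}$. Condition (b): the perturbation angle
\[
\epsilon'(T) = 2(v_1^y)^2 |\partial_z\phi_1(v_1)|^2 \, T\,\epsilon
\]
at the period $T = 2\pi n\, qr/s$ satisfies $\epsilon'(T)/(2\pi) < 1 - \{n\,qr/s\}$, with the convention that the right-hand side equals $1$ when $n\, qr/s$ is an integer. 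Since $\epsilon'(T)$ is linear in $\epsilon$ and there are only finitely many values of $n$ to consider, such an $\epsilon_0$ exists.

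Next, I would read off the index. By the discussion preceding the statement, in the global trivialization $\tau$ induced by $\{e_1,e_2\}$ of Proposition \ref{trivprop}, the linearized Reeb flow along $\gamma_1^n$ is a rotation through total angle $-T - \epsilon'(T)$. Setting $2\pi\theta_t$ equal to this angle at time $t$ (continuous with $\theta_0 = 0$) gives
\[
\theta_T = -n\tfrac{qr}{s} - \tfrac{\epsilon'(T)}{2\pi} = -n\tfrac{qr}{s} - \delta, \qquad 0<\delta<1,
\]
by condition (b). Whether $n\,qr/s$ is an integer or not, this choice of $\delta$ places $\theta_T$ in the open interval $\bigl(-\lfloor n\,qr/s\rfloor - 1,\, -\lfloor n\,qr/s\rfloor\bigr)$, so $\lfloor\theta_T\rfloor = -\lfloor n\,qr/s\rfloor - 1$. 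The elliptic formula $CZ_\tau(\gamma_1^n) = 2\lfloor\theta_T\rfloor + 1$ then yields the asserted value $-2\lfloor n\,qr/s\rfloor - 1$.

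The main obstacle is not deep but is a source of possible sign errors: one must verify that the perturbation genuinely produces a \emph{clockwise} infinitesimal rotation at $v_1$ in the $\{\partial_x,\partial_y\}$ frame (so the correction enters as $-\epsilon'(T)$ rather than $+\epsilon'(T)$), and that the frame $\{e_1,e_2\}$ rotates counterclockwise at unit angular speed along the vertical line over $v_1$; both facts can be read off from the explicit formulas, and once pinned down the rest of the argument is a short computation.
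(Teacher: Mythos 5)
Your proposal is correct and follows essentially the same route as the paper: the nondegeneracy and the clockwise rotation by $\epsilon'(T)$ come from the preceding lemma, the frame $\{e_1,e_2\}$ contributes the $-T$ rotation, and the elliptic formula $CZ_\tau = 2\lfloor\theta_T\rfloor+1$ plus the floor computation $\lfloor -n\frac{qr}{s}-\delta\rfloor = -\lfloor n\frac{qr}{s}\rfloor-1$ gives the stated index. Your explicit condition (b) on $\epsilon_0$ (handling the integer and non-integer cases of $n\,qr/s$ uniformly) is just a more careful spelling-out of the paper's ``select $\epsilon$ small enough'' step.
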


Indeed, let $T = 2\pi\frac{qr}{s}N$ and choose $\epsilon$ correspondingly small as above. If $t = 2\pi\frac{qr}{s}n$, then the definition reads $CZ_{\tau}(\gamma_1^n) = 2\lfloor\theta_t\rfloor + 1$, and in our case $\theta_t = -n\frac{qr}{s} -\frac{\epsilon'}{  2\pi}$ (with $\epsilon'$ depending on $t$ but bounded above in terms of $\epsilon$ as before).  We can select $\epsilon$ small enough that for $n\leq N$  we have $\lfloor-n\frac{qr}{s} -\frac{\epsilon'}{2\pi}\rfloor = \lceil -n\frac{qr}{s} - 1\rceil = -\lfloor n\frac{qr}{s} +1\rfloor = -\lfloor n\frac{qr}{s}\rfloor - 1$.

Clearly, constructions analogous to those above using the functions $\phi_2$ and $\phi_3$ give rise to perturbations of the contact form on $\Sigma(p,q,r)$ such that the exceptional Seifert fibers over $[v_2]$ and $[v_3]$, respectively, (and their iterates up to some fixed multiple) become nondegenerate Reeb orbits of the perturbed form, and we have corresponding formulas for the Conley-Zehnder indices. We now wish to perturb $\lambda_{p,q,r}$ in such a way as to preserve this structure near the exceptional fibers.

\subsection{Global perturbation and Conley-Zehnder indices}\label{globalsec}

It is a simple matter to construct a function $H: S^2(p,q,r)\to [0,1]$ with the following properties:
\begin{enumerate}
\item In a small neighborhood of the orbifold point $[v_j]$, $H$ agrees with $|\phi_j|^2$.
\item In the complement of the orbifold points $H$ is a Morse function having a single maximum and two saddle critical points, and no other critical points.
\end{enumerate}
Fixing such $H$, we will consider the perturbed contact form $f\lambda_{p,q,r}$, where $f = 1 + \epsilon \pi^*H$ and $\pi: \Sigma(p,q,r)\to S^2(p,q,r)$ is the projection. This construction has been well-studied in the case of prequantization bundles (cf. Nelson \cite{Ne18}, for example), and in fact we can bootstrap that study to our situation. 

Recall that we have a finite (orbifold) covering map $\BtoS2: B(p,q,r)\to S^2(p,q,r)$ where $B(p,q,r)$ is a smooth surface, and a corresponding finite covering $\EtoSg: E(p,q,r)\to \Sigma(p,q,r)$. It follows from the construction at the beginning of this section that $\EtoSg$ restricts to a trivial covering over generic circle fibers of $\Sigma(p,q,r)$. It is also easy to see that ${\BtoS2}^*H$ is a Morse function on $B(p,q,r)$ (this is clear away from the preimages of orbifold points, which is all we need, but is also true on those preimages since on $\hh$ the $\phi_j$  have simple zeros at the $v_j$). Moreover the critical points of $\BtoS2^*H$ are the preimages of the critical points of $H$ together with the preimages of the orbifold points, and the indices are preserved (the preimages of orbifold points are local minima). Finally, observe that $E(p,q,r)$ is a prequantization bundle: that is, as a principal $S^1$ bundle it admits a connection 1-form $\lambda$ such that $d\lambda$ is the pullback of a symplectic form on the base. Indeed, our $\SLt$-invariant contact form on $\hh\times\arr$ descends to just such a form, where the corresponding symplectic form is induced by $\frac{1}{y^2}dx\wedge dy$.

Now, the Reeb orbits of the contact form on a prequantization bundle are exactly the circle fibers, and in particular all are degenerate. Perturbing using a Morse function on the base produces a nondegenerate contact form whose closed Reeb orbits (up to a given action) are the circle fibers over the critical points of the Morse function. For the purposes of the following result we will write $\gamma_z$ for the closed orbit of the unperturbed contact form lying over a critical point $z$, and $\gamma_{z,\epsilon}$ for the same orbit considered as a nondegenerate orbit of the perturbed contact form.  

\begin{theorem}\label{IndexFormula} (\cite[Theorem 4.1]{Ne18}) Let $(V,\lambda)$ be a prequantization bundle over a closed symplectic surface $(\Sigma,\omega)$.  Fix a Morse function $H$ on $\Sigma$ such that $|H|_{C^2}<1$, and let $T>0$.  There exists $\epsilon>0$ such that all Reeb orbits $\gamma$ of $(1+\epsilon\pi^*H)\lambda$ with $\mathcal{A}(\gamma)<T$ are non-degenerate and project to critical points of $H$.  For a critical point $z$, supposing $\mathcal{A}(\gamma^N_z)<T$,
\begin{equation}\label{CZRSeqn}
 CZ(\gamma_{z,\epsilon}^N) = RS(\gamma_z^N)-1+\operatorname{index}_z(H)
 \end{equation}
where $\gamma_z^N$ is the $N$th iterate of the simple  Reeb orbit of $\lambda$ corresponding to the fiber over $z$, and $RS(\gamma^N_z)$ is its Robbin-Salamon index.
\end{theorem}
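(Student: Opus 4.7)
The plan is to reduce the perturbed Reeb dynamics to the Hamiltonian dynamics of $H$ on the base $\Sigma$, and then to compute $CZ(\gamma^N_{z,\epsilon})$ by viewing the linearized Reeb return map as a small perturbation of the degenerate symplectic path whose Robbin-Salamon index is $RS(\gamma^N_z)$.

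First I would redo the computation of Section \ref{Non1} in the abstract setting: for $\lambda_\epsilon = (1+\epsilon\pi^*H)\lambda$, write
\[
R_{\lambda_\epsilon} = \frac{1}{1 + \epsilon \pi^*H}\,R_\lambda + V,
\]
where $V\in\ker\lambda$ is the contact Hamiltonian vector field of $(1+\epsilon\pi^*H)^{-1}$. Because $d\lambda = \pi^*\omega$ along the horizontal distribution, $\pi_*V$ is a positive rescaling of the symplectic gradient $X_{\epsilon H}$ on $\Sigma$. Hence a closed orbit of $R_{\lambda_\epsilon}$ of action at most $T$ projects to an $X_{\epsilon H}$-trajectory of parameter length bounded in terms of $T$, which (using compactness of $\Sigma$ and the bound $\|H\|_{C^2}<1$) forces the projected trajectory to be a fixed point as soon as $\epsilon$ is sufficiently small. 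Thus up to the specified action, every perturbed Reeb orbit lies over a critical point $z$ of $H$ and is a fiber iterate $\gamma_{z,\epsilon}^N$.

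Second, I would set up the index computation by trivializing $\xi|_{\gamma_{z,\epsilon}^N}$ using the horizontal distribution of the connection, under which the contact planes along a fiber are identified with $T_z\Sigma$ up to the connection monodromy. In this trivialization the linearized perturbed return map factors as the monodromy (whose symplectic path records $RS(\gamma_z^N)$) followed by the time-$NT_0$ Hamiltonian linearization $\exp(NT_0\,\epsilon J\operatorname{Hess}_z H)$. For Morse index $0$ or $2$ this perturbation is a small rotation of angle $O(\epsilon)$; for a saddle (index $1$) it is hyperbolic with real eigenvalues close to but not equal to $1$. Avoiding a finite resonance set of $\epsilon$ then gives nondegeneracy for all iterates $\leq N$.

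Third, the Conley-Zehnder index is computed by crossing-form/spectral-flow: the perturbed symplectic path is a small deformation of the degenerate path whose Robbin-Salamon index is $RS(\gamma_z^N)$. At a minimum the small elliptic rotation is retrograde relative to the monodromy and rounds the half-integer $RS$ down, giving $CZ = RS - 1$; at a maximum the rotation is prograde and rounds up, giving $CZ = RS + 1$; at a saddle the hyperbolic perturbation splits the degenerate eigenvalue into a pair on the real axis and one checks directly that $CZ = RS$. These three cases assemble into the uniform formula $CZ(\gamma_{z,\epsilon}^N) = RS(\gamma_z^N) - 1 + \operatorname{index}_z(H)$. The main obstacle is the sign accounting in this last step: whether the small rotation at a minimum versus a maximum is prograde or retrograde depends on the interplay between $\omega$, $J$, and the $S^1$-orientation on the fibers. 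I would cross-check against Proposition \ref{excepCZ}, where $v_1$ is a local minimum of $|\phi_1|^2$ and the formula must specialize to $CZ = RS(\gamma_1^N) - 1 = -2\lfloor N qr/s\rfloor - 1$; matching these pins down the convention and confirms the elliptic case, with the saddle and maximum cases then following by the structural argument.
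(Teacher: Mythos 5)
The paper does not prove this statement: it is quoted verbatim from Nelson (\cite[Theorem 4.1]{Ne18}) and used as a black box, so there is no internal proof to compare against. That said, your sketch is a faithful reconstruction of the standard argument behind that theorem, and it is consistent with the one place where this paper does carry out the computation by hand, namely the local analysis of Section \ref{Non1} and Proposition \ref{excepCZ}: there the authors show the projected Reeb flow is the Hamiltonian flow of $f^{-1}$, linearize at the zero of $\phi_1$ (a minimum of $|\phi_1|^2$), find a small clockwise rotation superposed on the monodromy of the invariant trivialization, and read off $CZ=2\lfloor\theta_T\rfloor+1$ --- exactly your elliptic ``round down by one'' case, and your proposed cross-check against Proposition \ref{excepCZ} does pin the sign conventions correctly. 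Two soft spots to tighten if you wanted a complete proof. First, in your opening step, ``the vector field is $C^0$-small, hence the projected trajectory is a fixed point'' is not by itself sufficient: a small vector field can still have short closed orbits of small diameter. The correct statement is that nonconstant closed orbits of $X_{\epsilon H}$ are reparametrized closed orbits of $X_H$, whose periods are bounded below by some $\tau_0>0$ on a compact surface, so their $X_{\epsilon H}$-periods are at least $\tau_0/\epsilon$ and exceed any fixed action bound once $\epsilon$ is small. Second, the two sides of \eqref{CZRSeqn} must be computed in the same trivialization of $\xi$ along the fiber (the paper flags this immediately after the theorem, since it uses the invariant trivialization of Proposition \ref{trivprop} rather than the constant one of \cite{Ne18}); your ``monodromy followed by $\exp(NT_0\,\epsilon J\operatorname{Hess}_z H)$'' factorization is where that dependence enters, and the hyperbolic saddle case $CZ=RS$ then follows because the perturbation makes the return map positive hyperbolic with eigenvector winding equal to the integer monodromy winding.
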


The utility of this result is that the Robbin-Salamon index of an orbit $\gamma$ is defined even when $\gamma$ is degenerate, and is easily computed in the case of a prequantization bundle. Note that the two sides of \eqref{CZRSeqn} must be computed with respect to the same trivialization of the contact structure; we use the universal trivialization described above instead of the ``constant'' trivialization used in \cite{Ne18}. 

We do not review the definition of $RS(\gamma)$ here: we need only the fact proved in \cite[Example 4.4]{Ne18} that when the linearized flow along $\gamma$ is given by rotation by $t$ for $t\in[0,2\pi n]$, then $RS(\gamma) = 2n$. In our case, the simple orbit over $z$ (a critical point of $\BtoS2^*H$ other than a preimage of an orbifold point) is modeled on a generic vertical segment in $\hh\times\arr$ of length $2\pi d$, and the unperturbed flow is simply vertical translation. As we have seen, however, the invariant trivialization rotates counterclockwise through an angle of $2\pi d$ along this segment, so with respect to this trivialization the linearized flow appears as a {\it negative} rotation by $t$, for $t\in [0,2\pi d]$. Thus $RS(\gamma_z^N) = -2 d N = -2N\frac{pqr}{s}$.

\begin{corollary}\label{gradingcor}  Let $H: S^2(p,q,r)\to [0,1]$ and $f = 1 + \epsilon \pi^* H$ be as at the beginning of this subsection, and fix $T>0$. Then for all sufficiently small $\epsilon$, any Reeb orbit of the contact form $f\lambda_{p,q,r}$ having action less than $T$ is an iterate of one of the six simple orbits $\gamma_{v_1}, \gamma_{v_2}, \gamma_{v_3}, \gamma_{s_1}$, $\gamma_{s_2}$, $\gamma_m$, where
\begin{enumerate}
\item The orbits $\gamma_{v_1}, \gamma_{v_2}, \gamma_{v_3}$ are the exceptional Seifert fibers of order $p$, $q$, and $r$ respectively.
\item The orbits $ \gamma_{s_1}$, $\gamma_{s_2}$, $\gamma_m$ are the regular Seifert fibers lying over the index 1 critical points $s_1$ and $s_2$ and over the maximum $m$ of $H$. 
\end{enumerate}

Moreover, the Conley-Zehnder indices of the iterates of these orbits are given by 
\[
\ts CZ(\gamma_{v_1}^n) = -2\lfloor n \frac{qr}{s}\rfloor - 1, \quad CZ(\gamma_{v_2}^n) = -2\lfloor n \frac{pr}{s}\rfloor - 1, \quad CZ(\gamma_{v_3}^n) = -2\lfloor n \frac{pq}{s}\rfloor - 1
\]
and 
\[
\ts CZ(\gamma_{s_j}^n) = -2n\frac{pqr}{s}, \quad CZ(\gamma_m^n) = -2n\frac{pqr}{s} + 1.
\]
\end{corollary}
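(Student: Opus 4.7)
The plan is to combine the local analysis of Section \ref{Non1} near each exceptional Seifert fiber with Nelson's Theorem \ref{IndexFormula} for the regular fibers, applied on the smooth principal $S^1$-bundle $\EtoSg:E(p,q,r)\to B(p,q,r)$ where the prequantization framework holds verbatim. A key preliminary observation is that the horizontal projection $\pi_*R_{f\lambda}$ is, by \eqref{Xdef}, the Hamiltonian vector field $X_{1/f}$ on $\hh$, whose zero set is exactly the lift of the critical locus of $H$ on $S^2(p,q,r)$. Hence any closed Reeb orbit of $f\lambda_{p,q,r}$ must project to a critical point of $H$, giving the six candidate locations $[v_1],[v_2],[v_3],s_1,s_2,m$. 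Over each such point the Reeb field is a multiple of $\partial_t$, so there is a corresponding simple closed Reeb orbit in the fiber.

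For the regular fibers $\gamma_{s_1},\gamma_{s_2},\gamma_m$ I would invoke Theorem \ref{IndexFormula} on $E(p,q,r)$, which carries the prequantization form $\EtoSg^*\lambda_{p,q,r}$. The pullback $\BtoS2^*H$ is smooth and Morse on $B(p,q,r)$ with the same indices as $H$ at these three points (since $\BtoS2$ is a local diffeomorphism off the orbifold locus), so Nelson's theorem yields nondegeneracy together with the identity $CZ(\gamma_{z,\epsilon}^N)=RS(\gamma_z^N)-1+\operatorname{index}_z(H)$ for $\epsilon$ sufficiently small. The Robbin-Salamon index $RS(\gamma_z^N)=-2Nd=-2Npqr/s$ was essentially computed in the paragraph preceding the statement: the unperturbed Reeb flow is vertical translation, while the invariant trivialization $\{e_1,e_2\}$ rotates through angle $2\pi dN$ relative to the coordinate basis over a lift of length $2\pi dN$, so the trivial flow appears as a clockwise rotation by $2\pi dN$ in $\{e_1,e_2\}$. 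Substituting the Morse indices $1$ at the saddles and $2$ at the maximum yields the stated formulas, which descend to $\Sigma(p,q,r)$ because $\EtoSg$ is an $S^1$-equivariant covering that is trivial on regular fibers and preserves our global trivialization.

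For the exceptional fibers $\gamma_{v_j}$ I would use that, by construction, $H$ agrees with $|\phi_j|^2$ in a neighborhood of $[v_j]$, so the contact form $f\lambda_{p,q,r}$ restricted to a neighborhood of $\gamma_{v_j}$ is exactly the local model analyzed in Section \ref{Non1}. Proposition \ref{excepCZ}, together with its obvious analogues using $\phi_2,\phi_3$, then supplies nondegeneracy and the stated Conley-Zehnder indices for all iterates up to any prescribed $N$. Nelson's theorem would not directly suffice here: the simple orbit $\gamma_{v_1}$ has period $2\pi qr/s$, while any simple closed orbit on $E(p,q,r)$ lying over a preimage of $[v_1]$ is a regular fiber of period $2\pi d$, so only iterates $\gamma_{v_1}^{pk}$ lift to closed Reeb orbits upstairs. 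Proposition \ref{excepCZ} covers all iterates uniformly, and a short computation verifies that at the iterates $\gamma_{v_1}^{pk}$ the two approaches agree.

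The main obstacle will be reconciling the smallness conditions on $\epsilon$: one set comes from Proposition \ref{excepCZ} (a finite set of bad values at each of the three orbifold points, depending on $N$), and another from Nelson's theorem (depending on $\Vert H\Vert_{C^2}$ and $T$). All of these cut out open dense subsets of a subinterval of $(0,1)$, so one takes their intersection. A secondary point is that the exclusion of Reeb orbits not projecting to critical points of $H$ on $\Sigma(p,q,r)$ proceeds by lifting to $E(p,q,r)$: every closed Reeb orbit on $\Sigma(p,q,r)$ has an iterate that lifts to a closed Reeb orbit upstairs, and Nelson's theorem applied there forces projection to a critical point of $\BtoS2^*H$, which in turn descends to a critical point of $H$.
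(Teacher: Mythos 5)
Your proposal is correct and follows essentially the same route as the paper: the regular fibers $\gamma_{s_1},\gamma_{s_2},\gamma_m$ are handled by lifting to the prequantization bundle $E(p,q,r)$ and applying Theorem \ref{IndexFormula} with $RS(\gamma_z^N)=-2Nd$ (computed from the rotation of the invariant trivialization) and Morse indices $1$ and $2$, while the exceptional fibers are handled by Proposition \ref{excepCZ} and its analogues for $\phi_2,\phi_3$, with the exclusion of other orbits of small action coming from Nelson's theorem upstairs. Your added remarks --- that Nelson's theorem alone cannot see the fractional iterates of the exceptional fibers, and that the finitely many $\epsilon$-constraints must be intersected --- are consistent with, and slightly more explicit than, the paper's discussion.
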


Note that there are no bad Reeb orbits: the only hyperbolic orbits are those corresponding to the saddles $s_j$, which have even Conley-Zehnder index and are therefore positive hyperbolic.

Recall that the grading of a generator $\gamma$ of the cylindrical contact chain complex is given by $CZ(\gamma ) - 1$. It follows that the grading of all iterates of the exceptional Seifert orbits (thought of as Reeb orbits of $\lambda_\epsilon$, where we restrict to action less than some $T$ and $\epsilon$ is chosen correspondingly small) is {\it even}, so there can be no differentials between these generators. 

In fact, we can give a complete description of the subcomplex $CC^T(\Sigma(p,q,r),\lambda_\epsilon, J_\epsilon)$ generated by Reeb orbits of action no more than $T$, using the results of \cite{Ne18}. For the following we assume that $H: S^2(p,q,r)\to [0,1]$ has been chosen so that the two downward gradient trajectories starting from the index-1 critical point $s_1$ have limits on $[v_1]$ and $[v_2]$, while the downward trajectories from $s_2$ limit on $[v_2]$ and $[v_3]$.

\begin{theorem}\label{boundarythm} Fix $H$ and $T$ as above. Then with suitable orientation conventions, for all sufficiently small $\epsilon$ the only nontrivial differentials in the complex $CC^T(\Sigma(p,q,r), \lambda_\epsilon, J_\epsilon)$ are given by
\[
\partial (\gamma_{s_1}^n) = \gamma_{v_1}^{np} - \gamma_{v_2}^{nq} \quad\mbox{and}\quad \partial(\gamma_{s_2}^n) = \gamma_{v_2}^{nq} - \gamma_{v_3}^{nr}
\]
for each $n$ with $\mathcal{A}(\gamma_{s_i}^n) < T$.
\end{theorem}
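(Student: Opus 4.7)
The plan is to reduce the computation to Nelson's analysis of cylindrical contact homology for smooth prequantization bundles, using the finite cover $\EtoSg: E(p,q,r) \to \Sigma(p,q,r)$ introduced in Section~\ref{globalsec}. First I would use the Conley-Zehnder computation of Corollary~\ref{gradingcor} to restrict attention: the orbits $\gamma_{v_i}^n$ and $\gamma_m^n$ sit in even grading while the $\gamma_{s_j}^n$ sit in odd grading, so the only potentially nonzero matrix entries of $\partial$ must change parity. Combining this with the action filtration (to leading order in $\epsilon$ one has $\mathcal{A}(\gamma_m^n) > \mathcal{A}(\gamma_{s_j}^n) \geq \mathcal{A}(\gamma_{v_i}^{np_i})$ with differences of size $O(\epsilon)$) and the Fredholm requirement $\ind(u)=1$, the only potentially nonzero components of $\partial$ are from $\gamma_m^n$ to the $\gamma_{s_j}^n$ and from $\gamma_{s_j}^n$ to the $\gamma_{v_i}^{np_i}$. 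The iterates $\gamma_{v_i}^n$ themselves have no targets of the correct parity and smaller action within the action window $T$, forcing $\partial\gamma_{v_i}^n = 0$ automatically.

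Next I would lift the entire setup to the smooth prequantization bundle $\EtoB: E(p,q,r)\to B(p,q,r)$. Since $f\lambda_{p,q,r}$ and the compatible almost complex structure are assembled from $\SLt$-invariant data on $\hh\times\arr$, they pull back equivariantly, and $\BtoS2^{*} H$ is a Morse function on $B(p,q,r)$ whose critical points are precisely the preimages of the critical points of $H$. Upstairs we are thus in the setting of Theorem~\ref{IndexFormula} and the analysis of \cite{Ne18}: for sufficiently small $\epsilon$ the index-one $J$-holomorphic cylinders between iterates of orbits lying over adjacent-index critical points of $\BtoS2^{*} H$ correspond, iterate by iterate and with coherent signs, to Morse gradient trajectories on $B(p,q,r)$. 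In particular, on $E(p,q,r)$ the cylindrical differential restricted to action below $T$ reproduces the Morse differential of $\BtoS2^{*} H$ on each iteration stratum separately.

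To descend this information back to $\Sigma(p,q,r)$ I would exploit the hypothesis on $H$: its downward gradient trajectories from $s_1$ limit on $[v_1]$ and $[v_2]$, and those from $s_2$ on $[v_2]$ and $[v_3]$, and these lift to gradient trajectories of $\BtoS2^{*} H$ between the corresponding smooth critical points of the same Morse indices. The pairing of iterates is dictated by equality of unperturbed period: the $n$-th iterate of a regular fiber has the same underlying period $2\pi dn$ as the $np_i$-th iterate of the exceptional fiber of order $p_i$, so the Morse-Bott cascade matching can only relate $\gamma_{s_1}^n$ to $\gamma_{v_1}^{np}$ and $\gamma_{v_2}^{nq}$ (respectively $\gamma_{s_2}^n$ to $\gamma_{v_2}^{nq}$ and $\gamma_{v_3}^{nr}$). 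The signed count of gradient trajectories from $s_j$ then supplies the stated formula with one positive and one negative coefficient. For the maximum $m$, the two gradient trajectories from $m$ into each saddle contribute with opposite signs because the unstable disk of $m$ induces opposite orientations on the two incoming branches; this is the familiar vanishing of the top Morse differential on $S^2$, which transfers through the cover to give $\partial\gamma_m^n = 0$.

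The main obstacle is ensuring that the coefficient of $\gamma_{v_1}^{np}$ in $\partial\gamma_{s_1}^n$ comes out as exactly $\pm 1$, and not some integer reflecting the branched covering structure of $\EtoSg$ near the exceptional fibers. This requires tracking Nelson's weighted formula $\partial = \delta\kappa$, in which the factor $d(\gamma_{v_1}^{np}) = np$ from the covering multiplicity of the target enters through $\kappa$, against the $p$-fold ramification of $\BtoS2$ over $[v_1]$ which multiplies the naive count of lifted gradient trajectories by the same factor. Sorting out the signs is the other delicate point, requiring a coherent orientation scheme compatible with the $S^1$-equivariance of the cover; with standard conventions the Morse-theoretic signs $\partial s_1 = v_1 - v_2$ and $\partial s_2 = v_2 - v_3$ on $S^2(p,q,r)$ transfer directly to the asserted chain-level formulas.
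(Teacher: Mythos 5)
Your proposal follows essentially the same route as the paper: lift to the smooth prequantization bundle $E(p,q,r)\to B(p,q,r)$, invoke Nelson's correspondence between index-one holomorphic cylinders and downward gradient trajectories of $\BtoS2^*H$, descend by equivariance, and read off the Morse differential of $H$ on $S^2$ (so $\partial\gamma_m^n=0$ and the saddles map as claimed). The paper isolates the admissible targets by the free-homotopy-class constraint --- among iterates of the exceptional fibers only $\gamma_{v_1}^{np}$, $\gamma_{v_2}^{nq}$, $\gamma_{v_3}^{nr}$ are freely homotopic to a multiple of a regular fiber, which is also what guarantees that the lift of a cylinder has cylindrical domain --- whereas you use index plus action; both work here, but you should justify why downstairs cylinders lift to genuine cylinders. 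One bookkeeping correction: in $\partial=\delta\kappa$ the operator $\kappa$ weights the \emph{source} by its multiplicity $d(\gamma_{s_1}^n)=n$, and this cancels against the factor $1/d(u)=1/n$ attached to the $n$-fold covered cylinder over the gradient trajectory, yielding the coefficient $\pm 1$; your version ($d(\gamma_{v_1}^{np})=np$ from $\kappa$ against the $p$-fold ramification) misattributes the factors and as written would leave a stray $n$.
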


\begin{proof} The differential in $CC^T$ counts $J$-holomorphic cylinders in $\arr\times \Sigma(p,q,r)$, which are asymptotic to Reeb orbits $\gamma_\pm$ in the same free homotopy class. From the previous corollary, we can assume that this is the homotopy class of (a multiple of) a regular fiber in the Seifert fibration of $\Sigma(p,q,r)$ (note that among the iterates of the singular fibers, exactly $\gamma_{v_1}^{np}$, $\gamma_{v_2}^{nq}$ and $\gamma_{v_3}^{nr}$ lie in such a free homotopy class, for $n\in \zee$). 

As described above, there is a finite covering $E(p,q,r)\to \Sigma(p,q,r)$, under which our setup lifts to the setup of Nelson \cite{Ne18} on a prequantization bundle, albeit with a finite symmetry. In particular, a holomorphic cylinder $u: \arr\times S^1\to \arr\times \Sigma(p,q,r)$ lifts to a holomorphic map $\tilde{u} : S\to \arr\times E(p,q,r)$, where $S$ is a suitable cover of $\arr\times S^1$. Since $u$ is in the homotopy class of a multiple of a regular fiber, and the cover is trivial on regular fibers, the domain of the lifted curve $\tilde{u}$ is a disjoint union of cylinders. It suffices to consider an individual holomorphic cylinder $\tilde{u} : \arr\times S^1\to \arr\times E(p,q,r)$, satisfying $(id\times\EtoSg)\circ \tilde{u} = u$.

According to \cite[Theorems 5.1 and 5.5]{Ne18}, for given $T$ and sufficiently small $\epsilon$, every such curve $\tilde{u}$ is equivalent to a cylinder lying over a downward gradient trajectory of $\epsilon \widetilde{H}$ (where $\widetilde{H} = \BtoS2^*H$); this result does not require the almost complex structure on $\arr\times E(p,q,r)$ to be generic. In fact, there is a 1-1 correspondence between gradient trajectories and equivalence classes of holomorphic cylinders, at least in the relevant case of index difference 1 Reeb orbits or critical points. Hence given $u$, the lifted union of cylinders $\tilde{u}$ lies over a union of gradient trajectories which, by equivariance, descend to a unique gradient trajectory of $\epsilon H$ on $S^2(p,q,r)$. By choosing signs compatibly with those in the Morse complex as in \cite[Section 5.2]{Ne18}, and noting that in the Morse complex the boundary of the maximum point $m$ is zero, we obtain the result.

\end{proof}

The previous result describes the cylindrical contact chain complex up to a given action $T$. However, the action and index of generators are proportional in our situation, so a spectral sequence argument as in \cite{Ne18} shows that the full cylindrical contact homology can be computed as the limit of the homologies of the complexes $CC^T$. The latter is easy to describe using the results above. By way of notation, define the vector space
\[
G(p,q,r) = \bigoplus_{k=1}^{p-1} \cue\langle \gamma_{v_1}^k\rangle \oplus \bigoplus_{k=1}^{q-1} \cue\langle \gamma_{v_2}^k\rangle \oplus \bigoplus_{k=1}^{r-1} \cue\langle \gamma_{v_3}^k\rangle
\]
spanned by the indicated iterates of the exceptional Seifert fibers. By Theorem \ref{boundarythm} this subspace of $CC(\Sigma(p,q,r))$ does not interact with the boundary operator, hence can be considered as lying in the homology. Clearly it is has dimension $(p-1) + (q-1) + (r-1)$, and according to Corollary \ref{gradingcor} it is graded with nonzero summands in only even gradings between $-2$ and $-2d$ (recall that $d = \frac{pqr}{s} = pqr - qr-pr-pq$). Moreover, observe that for $n\geq 1$, replacing $\gamma_{v_1}^k$ by $\gamma_{v_1}^{k+np}$ (and similar for $\gamma_{v_2}$ and $\gamma_{v_3}$) in the definition of $G(p,q,r)$, we obtain an isomorphic vector space with grading shifted by $-2nd$ lying in the contact homology. 

The remaining part of the homology is that corresponding to the free homotopy class of an iterate of a regular fiber. By Theorem \ref{boundarythm}, if we concentrate on the class of a single fiber, the resulting complex has three generators in dimension $-2d-2$, two in dimension $-2d-1$, and one in dimension $-2d$. The boundary of the latter is trivial while the boundary map is injective on the summand in dimension $-2d-1$. Thus the homology of this subcomplex is isomorphic to the singular homology $H_*(S^2)$ with grading shifted by $-2d-2$. The subcomplexes corresponding to multiples of the regular fiber are isomorphic, with additional grading shift. This proves the following result, a restatement of Theorem \ref{firstthm} of the introduction.

\begin{theorem} Let $p,q,r$ be relatively prime positive integers satisfying $\frac{1}{p} + \frac{1}{q} + \frac{1}{r} < 1$. The cylindrical contact homology of $\Sigma(p,q,r)$ with its standard universally tight contact structure is given by:
\[
HC_*(\Sigma(p,q,r), \xi_{p,q,r}) = \bigoplus_{n\geq 1} G(p,q,r)[-2nd] \oplus \bigoplus_{n\geq 1} H_*(S^2)[-2nd-2].
\]
\end{theorem}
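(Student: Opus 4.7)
The plan is to assemble Corollary \ref{gradingcor} and Theorem \ref{boundarythm} into a complete description of the truncated complex $CC^T(\Sigma(p,q,r),\lambda_\epsilon,J_\epsilon)$ and then pass to $T\to\infty$. The starting point is that the cylindrical differential only counts holomorphic cylinders whose positive and negative ends are freely homotopic Reeb orbits, so the complex splits as a direct sum indexed by free homotopy classes of loops in $\Sigma(p,q,r)$. The Seifert description of $\Sigma(p,q,r)$ makes these classes transparent: for each $n\geq 1$ the $n$th power of a regular fiber is its own class, and it coincides with the free homotopy class of $\gamma_{v_1}^{np}$, $\gamma_{v_2}^{nq}$, and $\gamma_{v_3}^{nr}$; every other iterate $\gamma_{v_1}^{np+k}$ with $0<k<p$ (and the analogues for $v_2,v_3$) sits alone in its free homotopy class, containing no other Reeb orbit from Corollary \ref{gradingcor}.

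The homology therefore splits class by class. For each isolated iterate $\gamma_{v_1}^{np+k}$ with $0<k<p$ there is no possible differential, so the generator contributes one copy of $\cue$ to $HC_*$ in grading $-2\lfloor(np+k)qr/s\rfloor-2 = -2dn-2\lfloor kqr/s\rfloor-2$; the last equality uses $np\cdot qr/s = nd\in\zee$. Summing over $k$ together with the analogous $v_2,v_3$ contributions produces, for each $n\geq 0$, a copy of $G(p,q,r)$ with overall grading shifted by $-2dn$. For the class represented by the $n$th iterate of a regular fiber ($n\geq 1$), the subcomplex has three generators in grading $-2dn-2$ (the iterates $\gamma_{v_1}^{np},\gamma_{v_2}^{nq},\gamma_{v_3}^{nr}$), two in grading $-2dn-1$ (the saddles $\gamma_{s_1}^n,\gamma_{s_2}^n$), and one in grading $-2dn$ (namely $\gamma_m^n$). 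Theorem \ref{boundarythm} makes the boundary injective on the two-dimensional saddle subspace with rank-$2$ image in grading $-2dn-2$, while $\gamma_m^n$ is a cycle. A direct calculation gives homology $\cue$ in grading $-2dn$ and $\cue$ in grading $-2dn-2$, which is exactly $H_*(S^2)[-2dn-2]$.

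The final step is to pass from the truncated to the full contact homology. Because the action of each iterate under $f\lambda_{p,q,r} = (1+\epsilon\pi^*H)\lambda$ grows linearly with the iterate number, while Corollary \ref{gradingcor} shows that the grading also grows linearly and toward $-\infty$, only finitely many generators appear in any fixed grading range. Thus for each $T$ the action-filtration inclusion $CC^T\hookrightarrow CC^{T'}$ with $T'>T$ is a quasi-isomorphism below a grading threshold that tends to $-\infty$ with $T$, and the limit assembles to the stated decomposition. The main subtlety is choosing $\epsilon=\epsilon(T)$ and the almost complex structure coherently as $T\to\infty$; this is controlled by the invariance of cylindrical contact homology under changes of hypertight form and admissible $J$ established in \cite{HN2019}, which ensures each truncated choice computes the same $HC_*$ in the appropriate range.
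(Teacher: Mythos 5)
Your proposal is correct and follows essentially the same route as the paper: split the truncated complex $CC^T$ by free homotopy class, read off gradings from Corollary \ref{gradingcor}, use Theorem \ref{boundarythm} to see that the only nontrivial homology in the regular-fiber classes is $H_*(S^2)$ shifted by $-2dn-2$, and pass to the limit in $T$ using the proportionality of action and index (the paper cites a spectral sequence argument from \cite{Ne18} where you argue via quasi-isomorphisms below a grading threshold together with the invariance results of \cite{HN2019}, but these amount to the same limiting step). The only cosmetic discrepancy is your indexing of the $G(p,q,r)$ summands by $n\geq 0$ rather than $n\geq 1$, which matches the paper's own body-text computation and reflects an ambiguity in its stated shift convention rather than an error on your part.
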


\section{Brieskorn complete intersections}\label{gensec}

The calculations of the preceding sections carry over with little modification to the case of a general link $\Sigma(a_1,\ldots, a_n)$ of a Brieskorn complete intersection singularity \eqref{brieskorndef} in light of the results of Neumann \cite{neumann}. Here we allow $(a_1,\ldots, a_n)$ to be any $n$-tuple of integers, $a_j \geq 2$, subject to the condition that $\sum \frac{1}{a_j} < n-2$. In this case the results of section \ref{Bri2} generalize as follows. 

Let $P$ be a hyperbolic polygon with interior angles $\pi/a_1,\ldots, \pi/a_n$, and $\Delta = \Delta(a_1,\ldots,a_n)$ the orientation-preserving subgroup of the group of isometries generated by reflections in the edges of $P$. We consider the central extension $\tilde\Delta = \tilde\Delta(a_1,\ldots,a_n) \subset \SLt$ which is the preimage of $\Delta(a_1,\ldots, a_n)$ under the universal covering homomorphism $\SLt\to PSL(2,\arr)$, and let
\[
\Pi = \Pi(a_1,\ldots,a_n) = [\tilde\Delta,\tilde\Delta]\subset \SLt.
\]
The sequence \eqref{pises} becomes
\[
1\to C_0 \to \Pi \to [\Delta,\Delta]\to 1,
\]
where $C_0$ is the infinite cyclic center of $\Pi$ and the commutator subgroup $[\Delta, \Delta]\subset\Delta$ has index 
\[
m = \gcd_j(\Pi_{i\neq j} a_i). 
\]
Indeed, the index is not hard to compute from the presentation
\[
\Delta = \langle \delta_1,\ldots, \delta_n \, | \, \delta_1^{a_1} = \cdots = \delta_n^{a_n} =\delta_1\cdots\delta_n = 1\rangle.
\]
In particular, the induced action of $\Pi$ on the hyperbolic plane has a fundamental domain that is the union of $2m$ copies of the standard polygon $P$.

Similarly, the index of $\Pi$ in $\tilde\Delta$ is given by the absolute value of the determinant of the relation matrix in its presentation,
\[
\tilde\Delta = \langle \gamma_1,\ldots, \gamma_n, c\,|\, \gamma_1^{a_1} = \cdots = \gamma_n^{a_n} = c,\,\gamma_1\cdots\gamma_n = c^{n-2}\rangle.
\]
The latter is easily calculated and gives
\begin{equation}\label{ddef}
[\tilde\Delta\,:\, \Pi] = d =\ts (\prod_j a_j)(n-2-\sum_j \frac{1}{a_j}),
\end{equation}
which reduces to our earlier definition for $d$ if $n = 3$.

Now, a fundamental domain $D_{\tilde\Delta}$ for the action of $\tilde\Delta$ on $\SLt \cong\hh\times \arr$ can be identified with $(P\cup P')\times [0,2\pi]$, where $P'$ is a reflection of $P$ across an edge, and a fundamental domain $D_\Pi$ for $\Pi$ contains $d$ of these polygonal prisms. (Note that translation of $(P\cup P')\times [0,2\pi]$ by an element $\tilde\delta\in\tilde\Delta$, lifting a nontrivial element of $\Delta$, preserves the vertical segments but also shifts them up or down by varying amounts according to the rotation induced by the derivative of the action of $\tilde\delta$ on $\hh$.) As $D_\Pi$ projects to a fundamental domain $D_{[\Delta,\Delta]}$ for $[\Delta,\Delta]$, containing $m$ copies of $P\cup P'$, we see that over each copy of $P\cup P'$, the domain $D_\Pi$ contains $d/m$ copies of $D_{\tilde\Delta}$ ``stacked'' vertically. In particular we have an identification $\Pi\setminus \hh\times\arr \cong D_\Pi/\sim$, where $\sim$ indicates face identifications determined by $\Pi$, and vertical translations induce an action of $S^1$ on $\Pi\setminus\hh\times\arr$ whose regular fibers have length $2\pi d/m$.

Let $S = D_{[\Delta,\Delta]}$ be the result of making the edge identifications on the polygon $D_{[\Delta,\Delta]}$ induced by the action of $[\Delta,\Delta]$. Then $S$ is a closed surface of genus $g$ given by \eqref{genusS} (see \cite{neumann,neumannraymond}), smooth but with distinguished orbifold points arising from the vertices of $D_{[\Delta,\Delta]}$ (analogous to the orbifold $S^2(p,q,r)$ of Section \ref{Bri2}). The vertical segments over vertices $v$ of $D_{[\Delta,\Delta]}$ give rise to the exceptional fibers in the Seifert fibration
\[
\Pi\setminus \hh\times \arr \to S,
\]
but as differing numbers of vertices can be identified to different orbifold points, the number of orbifold points and their corresponding multiplicities take some working out. One finds that for $j = 1,\ldots, n$ there are $s_j$ orbifold points of multiplicity $t_j$, where
\[
s_j = \frac{\prod_{i\neq j} a_i}{\lcm_{i\neq j} a_i} \quad\mbox{and}\quad t_j = \frac{\lcm (a_i)}{\lcm_{i\neq j} a_i}.
\]

The Seifert fiber over each such orbifold point corresponds to a vertical segment over a vertex of $D_{[\Delta,\Delta]}$ such that the action of $\Pi$ identifies points on the segment that are separated by less than $2\pi d/m$. Specifically, the Seifert fiber over a point of multiplicity $t_j$ corresponds to a vertical segment of length $2\pi d/m t_j$.

Milnor's argument that $\Pi(p,q,r)\backslash \hh\times\arr \cong \Sigma(p,q,r)$ carries over with only minor modifications to show that $\Pi(a_1,\ldots,a_n) \backslash \hh\times\arr \cong \Sigma(a_1,\ldots, a_n)$, and as before the identification is induced by the construction of $\Pi(a_1,\ldots,a_n)$-automorphic forms on $\hh\times \cee$ (for details, see Neumann \cite{neumann77}). In particular, the contact form on $\Sigma(a_1,\ldots, a_n)$ induced by the invariant form $\lambda$ on $\hh\times \arr$ from Theorem \ref{invcontact} gives a contact structure isotopic to the one induced by thinking of $\Sigma(a_1,\ldots, a_n)$ as the link of the surface singularity as in \eqref{brieskorndef} (the Milnor fillable contact structure), by an argument analogous to that of section \ref{contstrident}. It follows just as in that section that the Milnor fillable contact structure is hypertight, when $\sum \frac{1}{a_j} < n-2$. 

Now we introduce a perturbation of the contact form to obtain nondegeneracy. As in section \ref{globalsec}, we can find a function $H: S\to [0,1]$ on the base of the Seifert fibration of $\Sigma(a_1,\ldots, a_n)$ that is  Morse away from small standard neighborhoods of the orbifold points of $S$. Each orbifold point corresponds to (an orbit of) a zero of the smooth map $|\phi_j|^2$ on $\hh$, where $\phi_j$ is an automorphic form on $\hh\times\wt{\cee^\times}$ as before, and since the restriction of $|\phi_j|^2$ to $\hh\times \arr$ is independent of the $\arr$ coordinate, it induces a function on $S$ (strictly, we are identifying $\hh\times \arr$ with its image in $\hh\times\wt{\cee^\times}$, see \eqref{liftedunit} and surrounding text). We can arrange that the function $H$ agrees with $|\phi_j|^2$ in a neighborhood of the corresponding orbifold point, and that the orbifold points are exactly the set of minima of $H$. 

Let $\{v_{ij}\}$, $i = 1,\ldots, s_j$, $j = 1,\ldots, n$   be the orbifold points on $S$ and let $x_1,\ldots, x_r, y$ be the ordinary critical points of $H$, where we may assume that $y$ is the unique critical point of index 2 and $x_1,\ldots, x_r$ have index 1. The analog of Corollary \ref{gradingcor} in this situation is:

\begin{corollary} Let $H: S^2(p,q,r)\to [0,1]$ and $f = 1 + \epsilon \pi^* H$, where $\pi: \Sigma(a_1,\ldots, a_n)\to S$ is the Seifert projection. Then for all sufficiently small $\epsilon$, any Reeb orbit of the contact form $f\lambda$ having action less than $T$ is an iterate of one of the  simple orbits $\gamma_{v_{ij}}, \gamma_{x_\ell}$, $\gamma_y$, where
\begin{enumerate}
\item The orbit $\gamma_{v_{ij}}$ is the exceptional Seifert fiber lying over $v_{ij}$.
\item The orbits $ \gamma_{x_\ell}$, $\ell = 1,\ldots, r$ and $\gamma_y$ are the regular Seifert fibers lying over the index 1 critical points $x_\ell$ and over the maximum $y$ of $H$. 
\end{enumerate}

Moreover, the Conley-Zehnder indices of the iterates of these orbits are given by 
\[
\ts CZ(\gamma_{v_{ij}}^n) = -2\lfloor \frac{nd}{mt_j}\rfloor - 1, 
\]
and 
\[
\ts CZ(\gamma_{x_\ell}^n) = -\frac{2nd}{m}, \quad CZ(\gamma_y^n) = -\frac{2nd}{m} + 1.
\]
\end{corollary}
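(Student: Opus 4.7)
The plan is to follow the template established for $\Sigma(p,q,r)$ (Corollary \ref{gradingcor} and its surrounding propositions), making the necessary bookkeeping adjustments for the general geometry. The three pieces of the statement—enumeration of simple orbits, index formula for exceptional fibers, and index formula for regular fibers—are proved by essentially independent arguments, and the arithmetic data $(d,m,t_j)$ feeds in only through the various fiber lengths.

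First I would enumerate the Reeb orbits of action at most $T$. Since $f = 1 + \epsilon\pi^*H$ is constant along fibers, the Reeb vector field $R_{f\lambda}$ projects on the smooth part of $S$ to the symplectic gradient of $1/f$, which vanishes exactly at critical points of $H$, while near each $v_{ij}$ the function $H$ is arranged to coincide with $|\phi_j|^2$ so that the local analysis of Section \ref{Non1} applies verbatim and shows that the exceptional fiber over $v_{ij}$ remains a Reeb orbit. For $\epsilon$ small compared to $1/T$, no other short closed orbits appear (e.g.\ by a Gronwall/action bound argument). This gives the enumeration of simple orbits.

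Next, for the regular orbits $\gamma_{x_\ell}$ and $\gamma_y$, I would apply Nelson's formula (Theorem \ref{IndexFormula}) after passing to the covering $\EtoSg:E(a_1,\ldots,a_n)\to \Sigma(a_1,\ldots,a_n)$ where $E$ is a genuine prequantization bundle over the smooth surface $B$; as in Section \ref{globalsec} this covering is trivial over regular fibers, so the Robbin-Salamon index of a regular fiber is unchanged by the lift. The essential input, recorded in the excerpt, is that a regular Seifert fiber has length $2\pi d/m$. Combined with the fact that the invariant trivialization $\{e_1,e_2\}$ rotates by the same amount along a vertical segment, the linearized unperturbed flow appears as a clockwise rotation by $t$ for $t\in[0,2\pi d/m]$, and the computation in Section \ref{globalsec} yields $RS(\gamma_z^n) = -2nd/m$ for any regular fiber. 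Feeding into $CZ = RS - 1 + \mathrm{index}_z(H)$ gives exactly the two formulas claimed (index $1$ for the $x_\ell$ and index $2$ for $y$).

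Finally, for the exceptional orbits $\gamma_{v_{ij}}^n$, I would repeat the local computation of Proposition \ref{excepCZ} with the single change that the fiber over an orbifold point of multiplicity $t_j$ has length $2\pi d/(mt_j)$ rather than $2\pi d/p$. Linearizing $X_{1/f}$ at $v_{ij}$ exactly as in Section \ref{Non1}, and using that $\phi_j$ has a simple zero there, yields a clockwise rotation of magnitude proportional to $\epsilon\,T_n$ in the $\{\partial_x,\partial_y\}$ frame, where $T_n = 2\pi nd/(mt_j)$ is the period of the $n$-th iterate. Converting to the invariant trivialization introduces an additional $-T_n$ rotation, and choosing $\epsilon$ small enough that the error term $\epsilon'$ does not push $\theta_{T_n} = -nd/(mt_j) - \epsilon'/(2\pi)$ across an integer forces $\lfloor \theta_{T_n}\rfloor = -\lfloor nd/(mt_j)\rfloor - 1$, giving $CZ(\gamma_{v_{ij}}^n) = -2\lfloor nd/(mt_j)\rfloor - 1$ via $CZ = 2\lfloor \theta_{T_n}\rfloor + 1$.

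The main obstacle is careful bookkeeping of fiber lengths: one must verify that regular fibers in $\Sigma(a_1,\ldots,a_n)$ really have period $2\pi d/m$ (which the excerpt asserts but which encodes the index $m$ of $[\Delta,\Delta]$ in $\Delta$) and that exceptional fibers over $v_{ij}$ have period $2\pi d/(mt_j)$; once these are accepted, the rotations in the invariant trivialization pick up the same factors and the index formulas emerge. Ensuring the prequantization lift $E\to B$ behaves correctly over orbifold points (so that Nelson's theorem applies to the regular critical points without interference from the $v_{ij}$) is the only slightly subtle geometric point, and is handled as in Section \ref{globalsec} by using the Morse character of $\BtoS2^*H$ on $B$ and the fact that preimages of $v_{ij}$ appear only as local minima.
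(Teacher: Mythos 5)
Your proposal is correct and follows essentially the same route as the paper: the paper's own (very terse) justification likewise enumerates the short orbits over critical and orbifold points, invokes the analogue of Proposition \ref{excepCZ} with the exceptional fiber length $2\pi d/(mt_j)$ in place of $2\pi qr/s$, and applies the Robbin--Salamon computation of Section \ref{globalsec} with $RS(\gamma_z^n) = -2nd/m$ for the regular fibers. Your version simply makes explicit the bookkeeping of fiber lengths and the passage through the prequantization cover that the paper leaves implicit.
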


Indeed, the form $f\lambda$ has (up to fixed action and for small enough $\epsilon$) only nondegenerate Reeb orbits over the orbifold points and critical points of $H$. Over an orbifold point, the Conley-Zehnder index is given by the analog of Proposition \ref{excepCZ}, where we replace $\frac{pqr}{s}$ by the number $\frac{d}{m}$ of full rotations in a regular orbit; similar considerations using the Robbin-Salamon index as in Section \ref{globalsec} give the Conley-Zehnder indices of orbits over ordinary critical points.

Just as in Section \ref{globalsec}, degree considerations show that only the subcomplexes of the cylindrical contact chain complex corresponding to iterates of a regular fiber can have nontrivial differentials. Moreover, that complex is identified with the Morse complex on (the smooth surface underlying) $S$, by an argument entirely analogous to the one proving Theorem \ref{boundarythm}. We obtain the following general result, which reduces to Theorem \ref{introgenthm} if the $a_j$ are pairwise relatively prime.

\begin{theorem}\label{generalthm} Let $a_1,\ldots, a_n$ be integers, $a_j\geq 2$, satisfying $\sum \frac{1}{a_j} < n-2$. The cylindrical contact homology of the Brieskorn manifold $\Sigma(a_1,\ldots,a_n)$ with its standard contact structure $\xi_{std}$ is given by:
\[
HC_*(\Sigma(a_1,\ldots, a_n), \xi_{std}) = \bigoplus_{n\geq 1} {\ts G(a_1,\ldots, a_n)[-\frac{2nd}{m}]} \oplus \bigoplus_{n\geq 1} {\ts H_*(S)[-\frac{2nd}{m}-2]}
\]
where $G(a_1,\ldots, a_n)$ is the part generated by the exceptional orbits, given by
\[
G(a_1,\ldots, a_n) = \bigoplus_{{i = 1,\ldots, s_j}\atop {j = 1,\ldots n}}\bigoplus_{k=1}^{t_j-1} \cue\langle \gamma_{v_{ij}}^k\rangle,
\]
graded such that $\gamma_{v_{ij}^k}$ has grading $-2\lfloor \frac{kd}{mt_j}\rfloor - 2$, and the (ordinary) homology group $H_*(S)$ carries its natural grading. 

The grading on $HC_*$ should be understood as calculated with respect to the universal trivialization of the contact structure described above.
\hfill$\Box$
\end{theorem}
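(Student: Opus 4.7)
The plan is to follow the template of Section \ref{globalsec} for the $n=3$ case, using the Corollary immediately preceding the theorem as a black box for the generators and their Conley-Zehnder indices. First I would observe that the iterated exceptional Seifert fibers $\gamma_{v_{ij}}^k$ all have odd Conley-Zehnder index $-2\lfloor kd/(mt_j)\rfloor - 1$, hence even grading, while the $t_j$-th iterate $\gamma_{v_{ij}}^{t_j}$ corresponds under the Seifert fibration to a regular-fiber class (its free homotopy class coincides with that of an iterate of a generic fiber). For $k = 1,\ldots, t_j - 1$ the iterates are genuinely exceptional, and since no differential can connect two even-graded generators they contribute freely to $HC_*$; assembled over multiplicities $n\geq 1$ of the regular-fiber class these become the summand $\bigoplus_{n\geq 1} G(a_1,\ldots,a_n)[-2nd/m]$ with the gradings dictated by the Corollary.

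Next I would analyze the subcomplex generated by Reeb orbits lying in the free homotopy class of the $n$-th multiple of a regular fiber, consisting of $\gamma_{x_\ell}^n$, $\gamma_y^n$, and the iterates $\gamma_{v_{ij}}^{nt_j}$. Lifting to the finite cover $E(a_1,\ldots,a_n) \to \Sigma(a_1,\ldots,a_n)$ of Section \ref{perturbsec}, which is a genuine prequantization bundle over the smooth surface $B$ covering the orbifold $S$, I would invoke \cite[Theorems 5.1 and 5.5]{Ne18}: for sufficiently small $\epsilon$ and bounded action, index-1 $J$-holomorphic cylinders in the symplectization correspond bijectively to downward gradient trajectories of $H$ on the base, via a formula strictly parallel to Theorem \ref{boundarythm}. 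Equivariant descent to $S$ identifies the subcomplex in the $n$-th regular-fiber class, up to the overall grading shift $-2nd/m - 2$, with the Morse complex of $H$ on the closed surface underlying $S$; its homology is therefore $H_*(S)$, yielding the second summand.

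Finally, I would promote the truncated computation to the full cylindrical contact homology by the action-filtration argument of \cite{Ne18}: since action and Conley-Zehnder index of orbits of $f\lambda$ are proportional up to an error controlled by $\epsilon\|H\|_{C^0}$, the natural maps $HC_*^T \to HC_*^{T'}$ stabilize for $\epsilon$ chosen small with respect to $T'$, and $HC_*(\Sigma(a_1,\ldots,a_n), \xi_{std})$ is the direct limit. The main obstacle I expect is the careful bookkeeping in the descent step: one must verify that the equivariant projection of gradient trajectories from the smooth cover $B$ to the orbifold $S$ introduces precisely the right combinatorial factors involving $t_j$, $s_j$, $m$, and $d$, so that the resulting Morse-theoretic complex computes $H_*(S)$ (rather than $H_*(B)$) and so that the boundary of $\gamma_{x_\ell}^n$ matches, as in Theorem \ref{boundarythm}, a signed combination $\gamma_{v_{i_1 j_1}}^{nt_{j_1}} - \gamma_{v_{i_2 j_2}}^{nt_{j_2}}$ with multiplicities correctly accounted for. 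All other parts of the argument are cosmetic adaptations of the $n=3$ case, but this identification of coefficients is the essential new input required for the general Brieskorn complete intersection.
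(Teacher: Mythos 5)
Your proposal is correct and follows essentially the same route as the paper: the paper likewise takes the preceding corollary as given, uses parity of gradings and free homotopy classes to isolate the regular-fiber subcomplexes, identifies those with the Morse complex of $H$ on the surface underlying $S$ by lifting to the prequantization bundle $E$ and descending equivariantly as in Theorem \ref{boundarythm}, and passes to the limit over the action filtration. The descent bookkeeping you flag is indeed the only point the paper treats by analogy rather than in detail.
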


\bibliographystyle{plain}
\bibliography{mybib}

\end{document}